\documentclass[11pt]{amsart}

\usepackage[a4paper,margin=3.2cm]{geometry}
\usepackage{amsmath,amssymb,amsthm,mathtools}
\usepackage[T1]{fontenc}
\usepackage[utf8]{inputenc}
\usepackage{lmodern}
\usepackage{hyperref}

\hypersetup{
  colorlinks=true,
  linkcolor=blue,
  citecolor=blue,
  urlcolor=blue
}

\newtheorem{theorem}{Theorem}
\newtheorem{proposition}[theorem]{Proposition}
\newtheorem{lemma}[theorem]{Lemma}
\newtheorem{corollary}[theorem]{Corollary}
\theoremstyle{definition}
\newtheorem{definition}[theorem]{Definition}
\theoremstyle{remark}
\newtheorem{remark}[theorem]{Remark}

\newcommand{\C}{\mathbb C}
\newcommand{\R}{\mathbb R}

\newcommand{\re}{\operatorname{Re}}
\newcommand{\im}{\operatorname{Im}}

\title[A Lewy theorem for pluriharmonic maps in $\C^2$]
{A Lewy-type theorem for pluriharmonic mappings in $\C^2$ and bi-Lipschitz quasiconformal harmonic maps}

\author{David Kalaj}
\address{University of Montenegro, Faculty of Natural Sciences and Mathematics, 81000 Podgorica, Montenegro}
\email{davidk@ucg.ac.me}

\subjclass[2020]{Primary 32U05; Secondary 30C65, 32S55, 31B05, 57K10}
\keywords{Pluriharmonic mapping, Lewy theorem, Jacobian, Milnor fibration, plane curve singularity, local flatness, quasiconformal mapping, Dini-smooth domain}

\begin{document}

\begin{abstract}
Lewy's theorem says that a one-to-one harmonic mapping between plane
domains has nonvanishing Jacobian. In dimensions at least three this
statement is false for general harmonic homeomorphisms. We prove a
four-dimensional Lewy theorem under the additional complex-analytic
assumption of pluriharmonicity. More precisely, if
\[
F:\Omega\subset \mathbb C^2\to \mathbb C^2
\]
is a \(C^2\) pluriharmonic mapping which is one-to-one in a neighborhood
of a point \(p\), then the real Jacobian of \(F\) is nonzero at \(p\).

The proof is local. If the Jacobian vanished, a nontrivial real linear
projection of \(F\) would be the real part of a holomorphic function
\(f\) with \(df(p)=0\). The level hypersurface of this projection would
therefore be a real analytic germ of the form
\[
\{\operatorname{Re} f=0\}.
\]
We prove that such a germ cannot be locally flat at a critical point of
\(f\). The obstruction is topological: local flatness forces the local
homology of the hypersurface germ to agree with that of a real
hyperplane, and hence forces every sufficiently small admissible link
to have the integral homology, in particular the Euler characteristic,
of \(S^2\). In the reduced case the Milnor open book of the plane curve
singularity \(f^{-1}(0)\) gives a contradictory Euler-characteristic
formula, while in the nonreduced case the local normal form produces
more than two local complementary components.

We then prove a separate bi-Lipschitz criterion for harmonic
quasiconformal mappings: a harmonic quasiconformal homeomorphism from
the unit ball onto a bounded \(C^1\)-Dini domain is bi-Lipschitz,
provided it is already a local \(C^1\)-diffeomorphism. This part uses a
regularized defining function, the quasihyperbolic metric, and a
blow-up argument whose limiting half-space height function is forced to
be linear. The Lewy-type theorem is used only afterward, to verify the
local-diffeomorphism hypothesis for quasiconformal pluriharmonic
homeomorphisms in \(\mathbb C^2\). In the planar case, Lewy's classical
theorem supplies the corresponding local nondegeneracy for harmonic
quasiconformal homeomorphisms between bounded finitely connected
\(C^1\)-Dini domains.
\end{abstract}

\maketitle

\section{Introduction}

Lewy's classical theorem \cite{Lewy1936} asserts that if
\[
        F=(u,v):U\subset\R^2\longrightarrow\R^2
\]
is harmonic and one-to-one, then its Jacobian determinant does not
vanish. Thus a univalent planar harmonic mapping is locally a
diffeomorphism. Lewy later proved a related result for harmonic
gradients in dimension three \cite{Lewy1968}. The unrestricted
higher-dimensional analogue, however, is false: Wood \cite{Wood1991}
constructed a harmonic homeomorphism of \(\R^3\) whose Jacobian
vanishes on a plane, and the same construction gives counterexamples in
every real dimension \(n\ge3\).

The purpose of this paper is to show that a positive four-dimensional
result is recovered if ordinary harmonicity is replaced by
pluriharmonicity. A complex-valued \(C^2\) function is pluriharmonic if
\[
        \partial\bar\partial f=0,
\]
and a mapping \(F:\Omega\subset\C^2\to\C^2\) is pluriharmonic if each
of its complex components is pluriharmonic. Equivalently, after the
identification \(\C^2\cong\R^4\), all four real coordinate functions of
\(F\) are pluriharmonic.

Naser \cite{Naser1973} claimed a Lewy-type assertion for
pluriharmonic mappings in real dimension four. The argument in
\cite{Naser1973} contains an important local-topological point which is
not justified as stated. More precisely, the proof first obtains a
component count for the complement of the set $\{h_1=0\}$ by looking at
one-dimensional complex slices. It then passes to the inverse image of a
sufficiently small sphere under the homeomorphism $h$ and compares this
with the fact that a real hyperplane cuts a three-sphere into two
components. This passage does not by itself prove the required local
statement: the inverse image of a small sphere is a link of the germ,
not a neighborhood of the origin, and a component count in complex-line
slices need not determine the topology of the complement in
$\mathbb R^4$.

A simple model illustrates the difficulty. Let
$f(z_1,z_2)=z_1^2+z_2^2$. On the complex line $\{z_2=0\}$, the equation
$\operatorname{Re} f=0$ becomes $\operatorname{Re} z_1^2=0$, and hence
the punctured disc is divided into four sectors. However, in
$\mathbb R^4\cong\mathbb C^2$ one has
\[
        \operatorname{Re}(z_1^2+z_2^2)
        =
        x_1^2+x_2^2-y_1^2-y_2^2,
        \qquad z_j=x_j+iy_j.
\]
Thus the complement of $\{\operatorname{Re} f=0\}$ near the origin has
the two regions
\[
        x_1^2+x_2^2>y_1^2+y_2^2,
        \qquad
        x_1^2+x_2^2<y_1^2+y_2^2.
\]
Hence the number of sectors seen in a complex-line slice does not
determine the number of local complementary components in
$\mathbb R^4$. Moreover, on a small sphere $S^3_\varepsilon$ the link is
\[
        S^3_\varepsilon\cap\{\operatorname{Re}(z_1^2+z_2^2)=0\}
        =
        \{|x|=|y|=\varepsilon/\sqrt 2\},
\]
which is a two-torus, not a two-sphere. This example shows that the
correct invariant is the topology of the embedded link, rather than a
component count in one-dimensional slices.

The present paper supplies this missing local argument in complex
dimension two. We show that if $f$ is holomorphic near $0\in\mathbb C^2$
and $df(0)=0$, then the real hypersurface germ
$\{\operatorname{Re} f=0\}$ is not locally flat at $0$. We use the term \emph{locally flat} in the topological sense for
embedded germs. Thus an embedded hypersurface germ \((\R^4,X,0)\) is
locally flat at \(0\) if there are neighborhoods \(U,V\) of \(0\) in
\(\R^4\), a real hyperplane \(H\subset\R^4\), and a homeomorphism
\[
        \Phi:U\to V
\]
such that
\[
        \Phi(0)=0,
        \qquad
        \Phi(X\cap U)=H\cap V.
\]
In other words, after a local topological change of coordinates in the
ambient space, the set \(X\) becomes a real hyperplane. The obstruction
is detected by the local homology of the hypersurface germ and, in the
reduced case, by the Euler characteristic of the link
$S^3_\varepsilon\cap\{\operatorname{Re} f=0\}$ as computed from
Milnor's fibration theorem for the plane curve singularity $f^{-1}(0)$.
This replaces the component-counting argument in complex-line slices
by an invariant of the embedded germ that is directly preserved by
local flatness.

\begin{theorem}\label{th:main}
Let \(\Omega\subset\C^2\) be a domain, and let
\[
        F:\Omega\longrightarrow\C^2
\]
be a \(C^2\) pluriharmonic mapping. If \(F\) is one-to-one in a
neighborhood of a point \(p\in\Omega\), then the real Jacobian of \(F\)
does not vanish at \(p\):
\[
        J_F(p)\ne0.
\]
\end{theorem}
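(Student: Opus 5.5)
We argue by contradiction: assume \(J_F(p)=0\), and translate so that \(p=0\) and \(F(0)=0\). Since \(J_F(0)=0\), the real differential \(dF(0):\R^4\to\R^4\) is singular. Hence there is a nonzero linear functional \(\lambda:\R^4\to\R\) with \(\lambda\circ dF(0)=0\). Put \(u=\lambda\circ F\). This is a real-valued pluriharmonic function, being a real linear combination of the real coordinate functions of \(F\).

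On a small ball around \(0\), which is simply connected, \(u=\re f\) for a holomorphic function \(f\) with \(f(0)=0\). Indeed, \(\partial u\) is a \(\bar\partial\)-closed \((1,0)\)-form, hence holomorphic, and \(d^c u\) is closed, so a harmonic conjugate exists. Since \(du(0)=\lambda\circ dF(0)=0\) and \(\re f=u\), the Cauchy–Riemann equations give \(df(0)=0\). Also \(f\not\equiv 0\): otherwise \(u\equiv0\), and \(F\) would map a neighborhood of \(0\) into the hyperplane \(\ker\lambda\), which contradicts invariance of domain for the injective continuous map \(F\).

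Next we transfer local flatness. Let \(V\) be a neighborhood of \(0\) on which \(F\) is injective. By invariance of domain, \(F:V\to F(V)\) is a homeomorphism onto an open set. It carries
\[
X=\{\re f=0\}\cap V=F^{-1}(\ker\lambda)\cap V
\]
onto \(\ker\lambda\cap F(V)\), which is a real hyperplane through \(0\). Therefore the germ \((\R^4,\{\re f=0\},0)\) is locally flat in the sense of the definition above, with \(\Phi=F\).

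It remains to prove that for \(f\) holomorphic near \(0\in\C^2\) with \(f(0)=0\), \(df(0)=0\) and \(f\not\equiv0\), the germ \(\{\re f=0\}\) is not locally flat at \(0\). This is the main obstacle. Local flatness gives
\[
H_*(X,X\setminus 0)\cong H_*(\R^3,\R^3\setminus0),
\]
and it also gives exactly two local complementary components. Via a conic-structure argument for the real analytic set \(X\), the first isomorphism forces the link \(K_\varepsilon=S^3_\varepsilon\cap X\) to have the homology of \(S^2\), so \(\chi(K_\varepsilon)=2\).

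We then split into two cases.

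\emph{Reduced case.} Let \(L=f^{-1}(0)\cap S^3_\varepsilon\) be the link, a disjoint union of circles. By Milnor's fibration, \(f/|f|:S^3_\varepsilon\setminus L\to S^1\) is a fibration with fiber \(M\), where \(M\) has the homotopy type of a wedge of \(\mu\) circles and \(\chi(M)=1-\mu\). Then \(K_\varepsilon=L\cup\{\arg f=\pm\pi/2\}\) is two fibers glued along \(L\). Since \(\chi(L)=0\), this gives
\[
\chi(K_\varepsilon)=2(1-\mu).
\]
Here \(\mu\ge1\) because \(0\) is a critical point, so \(\chi(K_\varepsilon)\le0\ne2\). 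This is a contradiction.

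\emph{Nonreduced case.} Write \(f=g^k h\) with \(k\ge2\) and \(g\) irreducible (after a suitable choice of local factorization). Near generic smooth points of \(\{g=0\}\), take local coordinates with \(f=z_1^k\cdot\text{unit}\). There \(\re f=0\) consists of \(2k\ge4\) sheets meeting along \(\{g=0\}\), and the complement has more than two local sectors. I would check that these sectors cannot all merge into only two components of \(B_\varepsilon\setminus X\). To do this, use the sign of \(\re f\) together with the fact that \(\arg f\) winds \(k\) times around \(\{g=0\}\). Alternatively, I would argue that \(X\) fails to be a topological \(3\)-manifold at such points, and that local flatness at \(0\) would make \(X\) a topological manifold near \(0\). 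Either route contradicts local flatness, which completes the proof.
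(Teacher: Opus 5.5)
Your proposal is essentially the paper's proof. The steps match: the reduction to \(u=\lambda\circ F=\re f\) with \(df(0)=0\); the transfer of local flatness using \(F\) itself as the flattening homeomorphism; the conic-structure argument giving the link the homology of \(S^2\); the Milnor open-book computation \(\chi(L_\varepsilon)=2(1-\mu)\) in the reduced case; and the local normal form \(f=w^k\) at a smooth point \(p\) of a multiple branch in the nonreduced case.

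One correction is needed in the nonreduced case. Your first route tries to show that the \(2k\) sectors at \(p\) cannot merge into only two components of \(B_\varepsilon\setminus X\). This cannot work, because sectors that are distinct near \(p\) can be joined far from \(p\).

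Take \(f=z_1^2z_2\) and write \(z_1=r_1e^{i\alpha}\), \(z_2=r_2e^{i\beta}\). Then \(\{\re f>0\}\cap B_\varepsilon\) is the product of the connected region \(\{r_1,r_2>0,\ r_1^2+r_2^2<\varepsilon^2\}\) with \(\{\cos(2\alpha+\beta)>0\}\subset T^2\). The latter set is connected, since \((\alpha,\beta)\mapsto 2\alpha+\beta\) has connected fibres, and the same holds for \(\{\re f<0\}\). So \(B_\varepsilon\setminus X\) has exactly two components for every \(\varepsilon\), even though there are four sectors near each point \((0,s)\) with \(s\neq0\).

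The contradiction must therefore be extracted locally at \(p\). There are two ways to do this:

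\begin{enumerate}
\item As the paper does: restrict the flattening homeomorphism to a neighbourhood of \(p\). Then \(X\) is locally flat at \(p\), so the germ of its complement there has exactly two components, not \(2k\ge4\).
\item By your second route, which is valid. On one hand, \(X\cap U\) is homeomorphic to an open subset of \(H\cong\R^3\). On the other hand, near \(p\) one has \(X\cong\R^2\times Y\) with \(Y\) the cone on \(2k\) points. Hence \(H_3(X,X\setminus p;\mathbb Z)\cong \mathbb Z^{2k-1}\ne\mathbb Z\), so \(X\) is not a topological \(3\)-manifold at \(p\).
\end{enumerate}

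The second route uses only the intrinsic topology of \(X\) rather than its embedding, so it avoids having to make precise what the local complementary components of a germ are.
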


The theorem is local both in the source and in the target. Thus global
univalence may be replaced by injectivity in a sufficiently small
neighborhood of the point under consideration. Throughout the paper,
``Jacobian'' means the real Jacobian determinant of the corresponding
mapping \(F:\R^4\to\R^4\).

The proof has two independent ingredients. The analytic ingredient is
simple: every real-valued pluriharmonic function is locally the real
part of a holomorphic function. Thus, if \(J_F(p)=0\), a nonzero real
linear projection of \(F\) gives a holomorphic function \(f\) satisfying
\(df(p)=0\). The geometric ingredient is the following topological
obstruction: for a nonconstant holomorphic function \(f\) in two
variables, the germ
\[
        \{\operatorname{Re} f=0\}
\]
cannot be locally flat at a critical point of \(f\). In the reduced
case this is detected by the local homology of the germ together with
the Euler characteristic of the Milnor open-book pages; in the
nonreduced case it follows from the local product normal form and the
number of local complementary components.

After proving Theorem~\ref{th:main}, we turn to quasiconformal
harmonic mappings. The bi-Lipschitz argument separates two issues.
First, boundary regularity of the image domain gives two-sided estimates
for the distance of \(F(z)\) to the boundary and yields a Lipschitz
bound. Second, a Lewy-type nondegeneracy theorem prevents the minimal
stretch from degenerating in the interior. This separation gives a
criterion which applies to any harmonic quasiconformal class satisfying
a Lewy property and stable under blow-ups.

Applying this criterion to the pluriharmonic class in \(\C^2\), using
Theorem~\ref{th:main}, gives the following consequence.

\begin{theorem}\label{th:intro-bilip}
Let \(\Omega\subset\C^2\cong\R^4\) be a bounded domain with
\(C^1\)-Dini boundary, and let
\[
        F:\mathbb B^2\to\Omega
\]
be a \(K\)-quasiconformal pluriharmonic homeomorphism. Then \(F\) is
bi-Lipschitz.
\end{theorem}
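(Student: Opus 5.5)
The plan is to combine the two ingredients described in the introduction: the general bi-Lipschitz criterion for harmonic quasiconformal maps onto \(C^1\)-Dini domains (proved later in the paper), and Theorem~\ref{th:main}, which supplies its local-diffeomorphism hypothesis. The argument has two parts.

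\textbf{Step 1: \(F\) is a local \(C^1\)-diffeomorphism.} A pluriharmonic map is real analytic, since each component is locally the real part of a holomorphic function. Hence \(F\in C^\infty(\mathbb B^2)\). Since \(F\) is a homeomorphism, it is one-to-one near every point \(p\in\mathbb B^2\). Theorem~\ref{th:main} then gives \(J_F(p)\neq0\) at every \(p\). By the inverse function theorem, \(F\) is a local \(C^1\)-diffeomorphism.

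\textbf{Step 2: apply the criterion.} Pluriharmonic functions are harmonic, so \(F\) is a harmonic \(K\)-quasiconformal homeomorphism of \(\mathbb B^2\subset\R^4\) onto the bounded \(C^1\)-Dini domain \(\Omega\). Together with Step~1, this puts us in the setting of the criterion.

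If I had to reprove the criterion myself, I would proceed as follows.

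\emph{Lipschitz bound.} Using a regularized defining function \(\rho\) for \(\Omega\), comparable to \(\operatorname{dist}(\cdot,\partial\Omega)\) and with Dini-continuous gradient, I would show two estimates.
\begin{itemize}
\item Quasiconformality, via quasi-invariance of the quasihyperbolic metric, gives \(\operatorname{dist}(F(x),\partial\Omega)\asymp (1-|x|)\). The upper bound uses the \(C^1\)-Dini boundary; the power-type Hölder bounds alone are not enough here.
\item Interior gradient estimates for harmonic maps on balls of radius comparable to \(1-|x|\) then give \(|DF(x)|\lesssim \operatorname{dist}(F(x),\partial\Omega)/(1-|x|)\lesssim 1\).
\end{itemize}

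\emph{Lower bound on the minimal stretch.} By quasiconformality, \(|DF|\le K\,\ell(DF)\), where \(\ell\) is the minimal stretch. So it suffices to bound \(|DF|\) below. I would argue by contradiction. Suppose \(x_k\) satisfies \(|DF(x_k)|\to0\).
\begin{itemize}
\item If \(x_k\) accumulates inside \(\mathbb B^2\), this contradicts Step~1, because \(DF\) is invertible at the limit point.
\item If \(x_k\to\partial\mathbb B^2\), rescale \(F\) around \(x_k\) at scale \(1-|x_k|\), and rescale the target accordingly.
\end{itemize}
The rescaled maps are uniformly Lipschitz and \(K\)-quasiconformal. The class of pluriharmonic maps is closed under affine rescaling and locally uniform limits, so a subsequence converges to a pluriharmonic \(K\)-quasiconformal map \(G\) from a half-space onto a half-space. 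The Dini condition makes the rescaled boundaries flatten to a hyperplane.

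The height function \(\langle G,\nu\rangle\) is a positive harmonic function on the half-space vanishing on the boundary. Hence it is linear, by the Poisson/Herglotz representation. So its gradient is nonzero, and together with quasiconformality this forces \(|DG|\) to be bounded below. This contradicts \(|DF(x_k)|\to0\) passing to the limit. Here one needs the limit to be a homeomorphism, which follows from the uniform quasiconformality and the convergence of the images.

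\textbf{Main obstacle.} The hardest step is the boundary blow-up: obtaining nondegenerate compactness, i.e.\ a nonconstant homeomorphic limit, and flattening of the rescaled domains under only the \(C^1\)-Dini hypothesis. I would handle this through the two-sided distance comparison above.

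Once the criterion is granted as stated, the proof of Theorem~\ref{th:intro-bilip} reduces to Step~1 and is short.
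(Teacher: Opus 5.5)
Your proof is correct and matches the paper's: since $F$ is injective, Theorem~\ref{th:main} gives $J_F\neq0$ at every point of $\mathbb B^2$, so $F$ is a local $C^1$-diffeomorphism, and since the real components of $F$ are harmonic, Theorem~\ref{th:harmonic-qc-local-diffeo-bilip} with $m=4$ gives the conclusion. One correction to your optional sketch of that criterion: the linear comparison $\delta_\Omega(F(x))\asymp 1-|x|$ comes from harmonicity, through a differential inequality for $\rho\circ F$ compared with radial barriers, and not from quasihyperbolic quasi-invariance, which only gives power-type bounds; the paper uses quasi-invariance instead to bound the oscillation of $F$ on $B(x,(1-|x|)/2)$ in the Lipschitz step.
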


In the plane, the same method uses Lewy's classical theorem instead of
Theorem~\ref{th:main}. It gives the following more flexible form, in
which both the source and the image may be finitely connected Dini-smooth
domains.

\begin{theorem}\label{th:intro-planar-bilip}
Let \(D,\Omega\subset\C\) be bounded finitely connected domains whose
boundary components are \(C^1\)-Dini Jordan curves. Let
\[
        f:D\to\Omega
\]
be a harmonic \(K\)-quasiconformal homeomorphism. Then \(f\) is
bi-Lipschitz.
\end{theorem}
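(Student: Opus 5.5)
The plan is to reduce Theorem~\ref{th:intro-planar-bilip} to two estimates. The first is a Lipschitz bound for \(f\), which comes from the boundary regularity of \(\Omega\). The second is a uniform lower bound for the minimal stretch \(\ell_f(z)=\min_{|h|=1}|df(z)h|\), which comes from Lewy's theorem together with a blow-up argument. Quasiconformality gives \(|df|\le K\ell_f\) almost everywhere. Since \(f\) is harmonic, it is smooth, so this holds everywhere. Once we know \(0<c\le \ell_f\le |df|\le C\) on \(D\), the inverse is locally Lipschitz with constant \(1/c\). A standard chord-arc argument then upgrades local bounds to global bi-Lipschitz bounds: for \(w_1,w_2\in\Omega\), join them by a curve in \(\Omega\) of length at most \(A|w_1-w_2|\). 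This is possible because bounded finitely connected domains with Dini-smooth boundary components are uniform and quasiconvex.

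\textbf{Step 1: reduction near the boundary.} Because both \(D\) and \(\Omega\) are bounded and finitely connected with \(C^1\)-Dini Jordan boundary curves, we localize near each boundary component. The quasiconformal homeomorphism \(f\) extends homeomorphically to \(\overline D\) and maps boundary components to boundary components. Near a boundary component of \(D\), we map a collar conformally onto an annulus or half-disc model by a Riemann map. By the Kellogg--Warschawski theorem for Dini-smooth curves, this Riemann map is bi-Lipschitz and \(C^1\) up to the boundary. Composing harmonic maps with conformal maps in the source preserves harmonicity. In the target it does not, so there we work directly with a regularized defining function \(\rho\) of \(\Omega\): \(\rho\) is \(C^1\) up to \(\partial\Omega\), smooth inside, with \(\rho\simeq -\operatorname{dist}(\cdot,\partial\Omega)\) and \(|\nabla^2\rho|\lesssim \omega(d)/d\) for a Dini modulus \(\omega\).

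\textbf{Step 2: the Lipschitz bound.} The function \(\rho\circ f\) is nearly harmonic: its Laplacian is \(\nabla^2\rho(df,df)\), which is controlled by \(|df|^2\omega(d)/d\). Quasiconformality together with the quasihyperbolic metric gives \(d(f(z))\simeq\) a power of \(d(z)\). We combine this with a Hopf-lemma and Dini-type estimate for near-harmonic functions vanishing on a Dini boundary. This yields \(\operatorname{dist}(f(z),\partial\Omega)\simeq\operatorname{dist}(z,\partial D)\). Interior gradient estimates for harmonic maps on the disc \(B(z,d(z)/2)\) then give \(|df(z)|\lesssim \operatorname{osc}f/d(z)\lesssim 1\). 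The same comparability, applied to the quasiconformal inverse, gives the two-sided distance estimate.

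\textbf{Step 3: lower bound for \(\ell_f\), the main obstacle.} We argue by contradiction. Suppose \(z_k\) satisfy \(\ell_f(z_k)\to0\). If a subsequence stays in a compact subset of \(D\), we take a limit point \(z_0\) with \(\ell_f(z_0)=0\), hence \(J_f(z_0)=0\). This contradicts Lewy's theorem \cite{Lewy1936}, since \(f\) is a univalent harmonic map. So \(z_k\to\partial D\). Set \(d_k=d(z_k)\) and rescale:
\[
g_k(\zeta)=\frac{f(z_k+d_k\zeta)-f(z_k)}{d_k}.
\]
By Step 2 the \(g_k\) are uniformly Lipschitz, harmonic, and \(K\)-quasiconformal. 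After rotation they converge locally uniformly, with derivatives by harmonicity, to a harmonic \(K\)-quasiconformal map \(g\) on a half-plane \(\{\operatorname{Im}\zeta>-1\}\). Because the boundaries are \(C^1\), the rescaled targets converge to a half-plane. The two-sided distance estimate then shows that \(g\) maps the half-plane into a half-plane and satisfies \(\operatorname{Im} g\simeq \operatorname{Im}\zeta+1\). Also \(g\) is nonconstant, since it is quasiconformal with a nondegenerate distance comparison, and hence \(g\) is a homeomorphism. The height function \(\operatorname{Im} g\) is a positive harmonic function on the half-plane that vanishes on the boundary. By the Herglotz/Poisson representation it is a positive multiple of \(\operatorname{Im}\zeta+1\), so it is linear. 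Then \(\operatorname{Re} g\) is harmonic with gradient bounded, and quasiconformality together with the Liouville theorem forces \(g\) to be affine with nonzero Jacobian. But \(\ell_{g_k}(0)=\ell_f(z_k)\to0\) and \(dg_k(0)\to dg(0)\), so \(\ell_g(0)=0\), which is a contradiction.

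The delicate points I expect are two. First, one must show that the limit is a genuine homeomorphism onto a half-plane and not a degenerate map; the distance comparability from Step 2 is what should secure this. Second, the convergence of targets to a half-plane needs only \(C^1\) regularity, while the Dini condition is used solely in Step 2.
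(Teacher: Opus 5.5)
Your architecture matches the paper's: a two-sided boundary-distance estimate from a regularized defining function, then the quasihyperbolic plus interior-gradient Lipschitz bound, Lewy's theorem in the interior, a boundary blow-up with a half-plane Liouville argument for the height, and quasiconvexity to globalize. The paper transplants the source to a circle domain via Koebe's theorem and Kellogg--Warschawski instead of using collar Riemann maps, which is equivalent.

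There is a false step, however. In Step~3 you claim that quasiconformality together with Liouville ``forces $g$ to be affine.'' Liouville needs harmonicity on all of $\C$, and $\re g$ has no boundary condition that would allow a reflection. On a half-plane, a harmonic, Lipschitz, quasiconformal map with linear height need not be affine. For instance, $g(x+iy)=ax+\epsilon e^{-y}\sin x+iay$ with $0<\epsilon<a$ is a harmonic quasiconformal homeomorphism of $\{y>0\}$ onto itself with $\im g=ay$. You do not need affineness. Once $g$ is nonconstant, it is $K$-quasiconformal, and it is $C^1$ as a $C^1_{\mathrm{loc}}$ limit. The pointwise distortion inequality then gives $\ell_g(0)\ge |Dg(0)|/K\ge |\nabla(\im g)(0)|/K=a/K>0$, which is how the paper reaches the contradiction. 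A smaller point: you normalize at $f(z_k)$ rather than at a nearest boundary point, so the height function is $\im g+t$ with $t=\lim\delta_\Omega(f(z_k))/d_k\in[c,C]$ along a subsequence, not $\im g$ itself.

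Step~2 also does not close as written. The error term $\nabla^2\rho(df,df)$ contains $|df|^2$, which is exactly what you are trying to bound. The interior gradient estimate only enters after the Hopf step, so the ``near-harmonic'' argument is circular.

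The paper avoids this as follows. Quasiconformality gives $|df|\le K\ell(df)$, and for $v=\rho\circ f$ one has $|\nabla v|\ge \ell(df)\,|\nabla\rho(f)|\gtrsim\ell(df)$. Hence $|\Delta v|\le A\,\frac{\omega(v)}{v}\,|\nabla v|^2$, an inequality in $v$ alone. Set $I(t)=\int_0^t\omega(s)s^{-1}\,ds$, finite by Dini, and $\Phi_\pm(t)=\int_0^t e^{\pm AI(s)}\,ds\asymp t$. Then $\Phi_+(v)$ is subharmonic and $\Phi_-(v)$ is superharmonic. Comparing them with radial harmonic barriers on the boundary collars gives $v\asymp\delta$ from both sides at once. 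So no argument for the non-harmonic inverse $f^{-1}$ is needed, contrary to what your last sentence of Step~2 suggests.

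Your H\"older route can also be repaired. You must first insert the quasihyperbolic gradient bound $|df(z)|\lesssim\delta_\Omega(f(z))/\delta_D(z)$ before the Hopf step. Then $|\Delta v|\lesssim v\,\omega(v)\,\delta_D^{-2}$. With $v\lesssim\delta_D^{\alpha}$, the potential $V(t)=\omega(Ct^\alpha)\,t^{-2}$ satisfies $\int_0 tV(t)\,dt<\infty$ exactly by the Dini condition, which is what a barrier argument for $|\Delta v|\le Vv$ needs.
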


The proof of these bi-Lipschitz results is given in the final section.
It uses a regularized distance function for \(C^1\)-Dini domains,
quasihyperbolic distortion estimates, compactness of quasiconformal
mappings, and the appropriate Lewy theorem.

The paper is organized as follows. Section~2 collects the analytic
preliminaries used in the local reduction: the primitive lemma for
real-valued pluriharmonic functions and the standard local factorization
facts for plane curve germs. Section~3 proves the topological obstruction to
local flatness of \(\{\operatorname{Re} f=0\}\). Section~4 proves the
Lewy theorem. Section~5 proves the bi-Lipschitz theorem for harmonic
quasiconformal local diffeomorphisms and then applies it to pluriharmonic
mappings in \(\C^2\) and to planar harmonic mappings between Dini-smooth
domains.

\section{Analytic preliminaries}

We collect two standard local facts used in the proof of the Lewy-type
theorem: the existence of holomorphic primitives for real-valued
pluriharmonic functions, and the local factorization of plane curve germs.
We begin with the elementary primitive result used in the proof of the main
theorem.

\begin{lemma}\label{lem:primitive}
Let $U\subset\C^2$ be a simply connected domain, and let
$u\in C^2(U,\R)$ be pluriharmonic. Then there is a holomorphic function
$f$ on $U$ such that
\[
        u=\re f.
\]
The function $f$ is unique up to addition of a purely imaginary
constant.
\end{lemma}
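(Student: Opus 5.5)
The plan is to build the harmonic conjugate of $u$ explicitly, as a line integral of a closed real $1$-form. Simple connectivity of $U$ then makes that integral path-independent.

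\textbf{Step 1: the form and its closedness.} Define the real $1$-form
\[
\omega=d^c u=-\sum_{j=1}^2\Bigl(\frac{\partial u}{\partial y_j}\,dx_j-\frac{\partial u}{\partial x_j}\,dy_j\Bigr),
\qquad z_j=x_j+iy_j .
\]
Equivalently, $\omega=2\,\im(\partial u)=-i(\partial u-\bar\partial u)$. Since $u\in C^2$, $\omega$ is $C^1$. Differentiating,
\[
d\omega=-i(\bar\partial\partial u-\partial\bar\partial u)=2i\,\partial\bar\partial u=0
\]
by pluriharmonicity. In real coordinates this is the vanishing of the coefficients $u_{x_jx_k}+u_{y_jy_k}$ and $u_{x_jy_k}-u_{y_jx_k}$. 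These are exactly the real and imaginary parts of $4\,\partial^2u/\partial z_j\partial\bar z_k=0$. I expect this sign bookkeeping to be the only delicate point; the rest is routine.

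\textbf{Step 2: the conjugate and holomorphy.} Because $U$ is simply connected, $H^1_{dR}(U)=0$. So there is a $C^2$ real function $v$ with $dv=\omega$, for instance $v(z)=\int_{z_0}^z\omega$ along any path, which is well defined by Stokes' theorem and simple connectivity. Set $f=u+iv$. The relation $dv=\omega$ reads
\[
v_{x_j}=-u_{y_j},\qquad v_{y_j}=u_{x_j},\qquad j=1,2.
\]
These are the Cauchy--Riemann equations in each variable separately. Hence $\partial f/\partial\bar z_j=0$ for $j=1,2$, and since $f$ is $C^1$ it is holomorphic on $U$, with $\re f=u$.

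\textbf{Step 3: uniqueness.} Suppose $f_1,f_2$ are holomorphic with $\re f_1=\re f_2=u$. Then $g=f_1-f_2$ is holomorphic with $\re g\equiv0$. So $\im g$ is real-valued and holomorphic, and the Cauchy--Riemann equations give $d(\im g)=0$. Since $U$ is connected, $g$ is a purely imaginary constant.
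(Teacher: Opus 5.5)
Your proof is correct and is essentially the paper's argument. The paper integrates the closed holomorphic $(1,0)$-form $2\partial u$ to get $f$ directly, and since $2\partial u = du + i\,\omega$ with your $\omega = d^c u$, the form you integrate to obtain $v$ is precisely its imaginary part, so your $v$ is the paper's $\im f$ up to a constant; the uniqueness step is the same in both.
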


\begin{proof}
Since $u$ is pluriharmonic,
\[
        \bar\partial\partial u=0.
\]
Set
\[
        \omega=2\partial u.
\]
Then $\omega$ is a holomorphic $(1,0)$-form. Moreover,
\[
        d\omega=\partial\omega+\bar\partial\omega=0,
\]
because $\partial^2=0$ and $\bar\partial\partial u=0$. Since $U$ is
simply connected, the closed holomorphic one-form $\omega$ has a
holomorphic primitive. Hence there exists a holomorphic function $f$
such that
\[
        df=\omega=2\partial u.
\]
Taking real parts gives
\[
        d(\re f)=\re(df)=\re(2\partial u)=du.
\]
Thus $u-\re f$ is constant. Adding a real constant to $f$, we may
arrange that $u=\re f$. If $f$ and $g$ are two such holomorphic
functions, then $\re(f-g)=0$, so $f-g$ is constant and purely imaginary.
\end{proof}

\subsection{Local factorization of plane curve germs}

We shall use standard facts from local analytic geometry and plane curve
singularities. References include Grauert--Remmert
\cite[Ch.~2]{GrauertRemmert1984}, H\"ormander \cite[Ch.~VI]{Hormander1990},
Chirka \cite{Chirka1989}, Wall \cite[Ch.~2]{Wall2004}, and
Brieskorn--Kn\"orrer \cite[Ch.~8]{BrieskornKnorrer1986}.

\begin{definition}\label{def:convergent-power-series}
The ring
\[
        \C\{z_1,z_2\}
\]
is the local ring of convergent power series at $0\in\C^2$. Thus an
element of $\C\{z_1,z_2\}$ is a power series
\[
        \sum_{j,k\ge0} a_{jk}z_1^jz_2^k
\]
which converges in some neighborhood of the origin, considered as a germ
at $0$.
\end{definition}

\begin{definition}\label{def:unit-irreducible-associated}
Let $R=\C\{z_1,z_2\}$.
\begin{enumerate}
\item A function $u\in R$ is a \emph{unit} if it has a multiplicative
inverse in $R$. Equivalently, $u(0)\ne0$. After shrinking the
representative neighborhood, this means that $u$ is nowhere zero.

\item A nonzero nonunit $g\in R$ is \emph{irreducible} if every
factorization
\[
        g=ab,
        \qquad a,b\in R,
\]
forces either $a$ or $b$ to be a unit.

\item Two nonzero elements $g,h\in R$ are \emph{associated} if
\[
        g=vh
\]
for some unit $v\in R$. Otherwise they are \emph{nonassociated}.
\end{enumerate}
\end{definition}

Geometrically, an irreducible factor in $\C\{z_1,z_2\}$ defines one
local branch of a plane analytic curve. Associated irreducible factors
define the same branch, while pairwise nonassociated irreducible factors
define distinct branches.

\begin{proposition}\label{prop:ufd-convergent-power-series}
The local analytic ring
\[
        \C\{z_1,z_2\}
\]
is a unique factorization domain.
\end{proposition}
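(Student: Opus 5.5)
The plan is the classical argument: $\C\{z_1,z_2\}$ is Noetherian, and the Weierstrass preparation theorem reduces the problem to the polynomial ring $\C\{z_1\}[z_2]$. We will use Gauss's lemma together with induction on the number of variables, since $\C\{z_1\}$ is a discrete valuation ring and hence a UFD. Concretely, the proof runs in three steps.

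\textbf{Step 1 (base case).} First treat $\C\{z_1\}$. Every nonzero germ can be written as $z_1^k u(z_1)$ with $u(0)\neq 0$. This shows that $\C\{z_1\}$ is a principal ideal domain, in fact a DVR with uniformizer $z_1$, and therefore a UFD.

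\textbf{Step 2 (reduction to Weierstrass polynomials).} Let $g\in\C\{z_1,z_2\}$ be a nonzero nonunit. After a generic linear change of coordinates, $g$ becomes $z_2$-regular of some order $d$, meaning $g(0,z_2)=c z_2^d+\dots$ with $c\ne0$. The change of coordinates is a ring automorphism, so it preserves units, irreducibility and factorizations. The Weierstrass preparation theorem then gives $g=vP$, where $v$ is a unit and $P\in\C\{z_1\}[z_2]$ is a Weierstrass polynomial of degree $d$, i.e.\ monic with nonmonic coefficients vanishing at $0$. Two facts about such polynomials are needed.

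\begin{enumerate}
\item \emph{Factors of $P$ are again Weierstrass polynomials up to units.} If $P=ab$ in $\C\{z_1,z_2\}$, then $a$ and $b$ are $z_2$-regular, because their orders on the line $z_1=0$ add up to $d$. Hence they are units times Weierstrass polynomials. By the uniqueness part of preparation, $P$ is the product of those Weierstrass polynomials.
\item \emph{Irreducibility agrees in the two rings.} A Weierstrass polynomial is irreducible in $\C\{z_1,z_2\}$ exactly when it is irreducible in $\C\{z_1\}[z_2]$. This follows from the same uniqueness statement, combined with the observation that any monic factor in $\C\{z_1\}[z_2]$ of a Weierstrass polynomial is itself Weierstrass. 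To see the latter, set $z_1=0$: one gets $P(0,z_2)=z_2^d$, so the factors reduce to powers of $z_2$.
\end{enumerate}

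\textbf{Step 3 (existence and uniqueness).} Since $\C\{z_1\}$ is a UFD, Gauss's lemma shows that $\C\{z_1\}[z_2]$ is a UFD. Monic polynomials factor into monic irreducibles, which by Step 2 are Weierstrass polynomials irreducible in $\C\{z_1,z_2\}$. This gives existence. For uniqueness, suppose $g=v\prod p_i=v'\prod q_j$ with all $p_i,q_j$ irreducible in $\C\{z_1,z_2\}$. Each factor can be normalized to a unit times a Weierstrass polynomial, with the same coordinate choice for every factor since all of them divide $g$. Uniqueness in Weierstrass preparation then yields $\prod p_i^{W}=\prod q_j^{W}$ as monic polynomials. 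Uniqueness of factorization in $\C\{z_1\}[z_2]$ finishes the argument. Alternatively, one can prove that irreducibles are prime directly, using Weierstrass division together with the polynomial-ring UFD.

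The main obstacle is the bookkeeping in Step 2: checking that factorization in the power series ring corresponds exactly to factorization of the Weierstrass polynomial. This is where the uniqueness half of the Weierstrass preparation theorem, and the $z_2$-regularity of factors, must be invoked carefully. Since the result is classical, in the paper it suffices to cite Grauert--Remmert \cite[Ch.~2]{GrauertRemmert1984} or H\"ormander \cite[Thm.~6.2.2]{Hormander1990} and sketch this reduction.
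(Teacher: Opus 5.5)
Your argument is correct, but it takes a different route from the paper. The paper proves the proposition in one line: $\C\{z_1,z_2\}$ is a regular local ring, and every regular local ring is factorial by the Auslander--Buchsbaum theorem \cite[Thm.~20.3]{Matsumura1986}.

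You instead give the classical several-variables proof. Weierstrass preparation replaces a nonunit germ by a Weierstrass polynomial in $\C\{z_1\}[z_2]$. Your two facts then transfer factorizations between the two rings: factors of a Weierstrass polynomial are Weierstrass up to units, and irreducibility is the same in $\C\{z_1\}[z_2]$ and in $\C\{z_1,z_2\}$. Gauss's lemma over the discrete valuation ring $\C\{z_1\}$ then supplies existence and uniqueness. The details check out. Every factor $p_i$ of $g$ is $z_2$-regular because $p_i(0,z_2)$ divides $g(0,z_2)\neq0$, so one coordinate change serves all factors. The uniqueness half of preparation then identifies $\prod P_i^W$ with $\prod Q_j^W$.

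What each route buys:
\begin{itemize}
\item The paper's citation is brief and works verbatim in any number of variables. However, it rests on homological algebra and presupposes that $\C\{z_1,z_2\}$ is Noetherian. That fact is itself normally proved by Weierstrass division, so the analytic input is hidden rather than avoided.
\item Your route is self-contained modulo preparation and Gauss's lemma, and it never actually uses Noetherianity. Your opening remark about Noetherianity and the ``induction on the number of variables'' are superfluous here, since only the single step from $\C\{z_1\}$ to $\C\{z_1,z_2\}$ occurs.
\item Your route also produces Weierstrass-polynomial representatives of the irreducible factors. Via discriminants and resultants, these are exactly what justifies the geometric facts the paper later uses without proof: an irreducible branch is smooth away from the origin, and distinct branches meet only at $0$.
\end{itemize}
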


\begin{proof}
The ring $\C\{z_1,z_2\}$ is a regular local ring. Every regular local
ring is factorial. Hence $\C\{z_1,z_2\}$ is a unique factorization
domain. See, for example, Matsumura \cite[Theorem~20.3]{Matsumura1986}.
\end{proof}

\begin{theorem}[Local factorization theorem]\label{th:local-factorization}
Let $f$ be a nonzero holomorphic function near $0\in\C^2$, and suppose
that
\[
        f(0)=0.
\]
After shrinking the neighborhood of $0$, one can write
\[
        f=u f_1^{m_1}\cdots f_r^{m_r},
\]
where $u$ is holomorphic and nowhere zero, the functions
$f_1,\ldots,f_r$ are irreducible elements of $\C\{z_1,z_2\}$, no two of
them are associated, and $m_j\ge1$. The germs $\{f_j=0\}$ are precisely
the distinct irreducible local branches of the plane curve germ
$\{f=0\}$, and $m_j$ is the multiplicity of the corresponding branch.
\end{theorem}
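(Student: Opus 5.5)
The plan is to deduce the factorization from Proposition~\ref{prop:ufd-convergent-power-series} together with the identification of zero sets of irreducible germs. Since $f\ne0$ in $R=\C\{z_1,z_2\}$ and $f(0)=0$, the germ of $f$ is a nonzero nonunit. Unique factorization writes it as a unit times a finite product of irreducibles. Grouping associated factors (absorbing the connecting units into $u$) gives
\[
        f=u f_1^{m_1}\cdots f_r^{m_r},
\]
with $f_1,\dots,f_r$ pairwise nonassociated and $m_j\ge1$. Finitely many power series converge on a common polydisc. We shrink it so that the identity holds as functions there, which is possible because two convergent series equal as germs agree on a connected neighborhood. Since $u(0)\ne0$, shrinking further makes $u$ nowhere zero, as in Definition~\ref{def:unit-irreducible-associated}. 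This settles the algebraic part.

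The geometric assertions carry the real content. First, $\{f=0\}=\bigcup_j\{f_j=0\}$ because $u$ does not vanish. Next we must show:
\begin{enumerate}
\item each germ $\{f_j=0\}$ is an irreducible analytic germ;
\item $\{f_j=0\}\ne\{f_k=0\}$ for $j\ne k$.
\end{enumerate}
For both we use the Rückert Nullstellensatz in $\C\{z_1,z_2\}$: if $g$ vanishes on the germ $\{h=0\}$ with $h$ irreducible, then $h\mid g^N$ for some $N$, and hence $h\mid g$ because $h$ is prime in a UFD.

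For (2), if $\{f_j=0\}=\{f_k=0\}$, then $f_j\mid f_k$ and $f_k\mid f_j$, so the two are associated, a contradiction. For (1), the ideal of the germ $\{f_j=0\}$ equals $(f_j)$ by the same argument, and $(f_j)$ is prime. So the germ is irreducible, since a germ is irreducible exactly when its ideal is prime. Finally, the germ decomposition into irreducible components is unique, so the $\{f_j=0\}$ are precisely the branches of $\{f=0\}$. The multiplicity $m_j$ of a branch is, by definition, the exponent of its defining irreducible in $f$; equivalently it is the vanishing order of $f$ at a generic smooth point of the branch, where $f_j$ is a local coordinate and the other factors are nonzero.

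To reduce the Nullstellensatz to one variable, we make a linear change of coordinates so that $h$ is $z_2$-regular. Weierstrass preparation then lets us replace $h$ by a Weierstrass polynomial in $z_2$, which is irreducible in $\C\{z_1\}[z_2]$. Its discriminant $\delta(z_1)$ is not identically zero. Hence for $z_1\ne0$ small, $h(z_1,\cdot)$ has simple roots, and these roots locally parametrize $\{h=0\}$. Divide $g$ by $h$ using Weierstrass division, $g=qh+\rho$ with $\deg_{z_2}\rho<\deg h$. Then $\rho(z_1,\cdot)$ vanishes at $\deg h$ distinct points for each such $z_1$, so $\rho\equiv0$.

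This Nullstellensatz step is the main obstacle. Since the result is standard, I would cite Grauert--Remmert \cite[Ch.~2]{GrauertRemmert1984} or Chirka \cite{Chirka1989} for it rather than reprove it in full.
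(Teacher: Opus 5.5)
Your proposal is correct and takes the same approach as the paper. The factorization comes from the unique factorization property of $\C\{z_1,z_2\}$ (Proposition~\ref{prop:ufd-convergent-power-series}), the unit is made nowhere zero by shrinking, and the identification of the germs $\{f_j=0\}$ with the branches, which the paper simply asserts as standard, you additionally justify through the R\"uckert Nullstellensatz via Weierstrass preparation and division. One small point to tighten: uniqueness of the decomposition into irreducible components requires irredundancy, i.e.\ $\{f_j=0\}\not\subseteq\{f_k=0\}$ for $j\ne k$, not merely inequality. Your argument gives this directly, since containment already yields $f_j\mid f_k$, hence association because $f_k$ is irreducible and $f_j$ is not a unit.
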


\begin{proof}
The Taylor series of $f$ defines an element of $\C\{z_1,z_2\}$. By
Proposition~\ref{prop:ufd-convergent-power-series}, this ring is a
unique factorization domain. Hence $f$ has a factorization
\[
        f=u f_1^{m_1}\cdots f_r^{m_r},
\]
where $u$ is a unit, the $f_j$ are pairwise nonassociated irreducible
elements, and $m_j\ge1$. Since $u(0)\ne0$, after shrinking the
representative neighborhood the function $u$ is nowhere zero. Thus $u$
does not affect the zero set. The irreducible factors $f_j$ give
exactly the distinct local analytic branches of $\{f=0\}$, and the
exponent $m_j$ records the multiplicity of the corresponding branch.
\end{proof}

\begin{definition}\label{def:reduced-plane-curve}
The germ $f\in\C\{z_1,z_2\}$ is called \emph{reduced} if in the
factorization
\[
        f=u f_1^{m_1}\cdots f_r^{m_r}
\]
all multiplicities satisfy $m_j=1$.
\end{definition}

\begin{remark}\label{rem:factorization-examples}
For example, $z_1$ is irreducible and defines the smooth branch
$\{z_1=0\}$. The function
\[
        z_1^2-z_2^2=(z_1-z_2)(z_1+z_2)
\]
has two distinct branches. By contrast, $z_1-z_2$ and $2(z_1-z_2)$ are
associated and define the same branch.
\end{remark}

\section{A topological obstruction to local flatness}

The central point of the proof is the following proposition. It says
that the real hypersurface \(\{\re f=0\}\) cannot be locally flat at a
critical point of the holomorphic function \(f\).

We use local flatness in the sense of embedded germs. Thus the germ
\((\R^4,X,0)\) is locally homeomorphic to the germ of a real hyperplane if
there are neighborhoods \(U,V\) of the origins in \(\R^4\), a real
hyperplane \(H\subset\R^4\), and a homeomorphism of pairs
\[
        (U,X\cap U,0)\longrightarrow (V,H\cap V,0).
\]

We first record the elementary consequence of local flatness which will
be used below. The point of the lemma is that local flatness determines
the local homology of the hypersurface germ and therefore the reduced
homology of every sufficiently small admissible link.

Throughout this section, \(H_k(\,\cdot\,;\mathbb Z)\) denotes singular
homology with integer coefficients, \(\widetilde H_k(\,\cdot\,;\mathbb Z)\)
denotes reduced singular homology, and \(H_k(A,B;\mathbb Z)\) denotes
relative singular homology of the pair \((A,B)\). We write
\(B_\varepsilon\subset\mathbb R^4\) for the closed Euclidean ball of radius
\(\varepsilon\) centered at the origin and
\(S^3_\varepsilon=\partial B_\varepsilon\) for its boundary.

Recall that, for a nonempty connected space \(Y\),
\[
\widetilde H_0(Y;\mathbb Z)=0,
\]
while
\[
\widetilde H_k(Y;\mathbb Z)=H_k(Y;\mathbb Z),
\qquad k>0.
\]
We shall also use the standard fact that, for a topological space \(L\), its
cone
\[
\operatorname{Cone}(L)
=
(L\times[0,1])/(L\times\{0\})
\]
is contractible, and that after removing the cone vertex \(v\), the punctured
cone
\[
\operatorname{Cone}(L)\setminus\{v\}
\]
deformation retracts onto \(L\).

\begin{lemma}\label{lem:flat-link-homology}
Let \(X\subset\mathbb R^4\) be a real analytic hypersurface germ at \(0\).
Assume that the embedded germ \((\mathbb R^4,X,0)\) is locally flat; that is,
it is homeomorphic, as a germ of pairs, to the germ
\[
(\mathbb R^4,\mathbb R^3\times\{0\},0).
\]
Then, for every sufficiently small admissible \(\varepsilon>0\), the link
\[
L_\varepsilon=X\cap S^3_\varepsilon
\]
has the integral homology of \(S^2\). More precisely,
\[
\widetilde H_j(L_\varepsilon;\mathbb Z)
\cong
\begin{cases}
\mathbb Z, & j=2,\\
0, & j\ne2.
\end{cases}
\]
In particular,
\[
\chi(L_\varepsilon)=2.
\]
\end{lemma}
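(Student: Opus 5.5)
The plan is to compare two computations of the local homology group \(H_*(X, X\setminus\{0\};\mathbb Z)\): one from the flat model and one from the conic structure of the real analytic germ.

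\textbf{Step 1 (flat model).} Because \((\mathbb R^4,X,0)\) is homeomorphic as a germ of pairs to \((\mathbb R^4,\mathbb R^3\times\{0\},0)\), the restricted homeomorphism identifies a neighborhood of \(0\) in \(X\) with a neighborhood of \(0\) in \(\mathbb R^3\). Local homology is a topological invariant of the germ \((X,0)\), and excision lets us shrink neighborhoods. Hence
\[
H_j(X\cap U,(X\cap U)\setminus\{0\};\mathbb Z)\cong H_j(\mathbb R^3,\mathbb R^3\setminus\{0\};\mathbb Z),
\]
which is \(\mathbb Z\) for \(j=3\) and \(0\) otherwise.

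\textbf{Step 2 (conic structure).} I will use the standard conic structure theorem for real analytic (or semianalytic) sets, due to Milnor and Łojasiewicz; this is what "admissible" is meant to encode. For every sufficiently small \(\varepsilon>0\), the sphere \(S^3_\varepsilon\) is transverse to \(X\) in the stratified sense. Moreover, \(X\cap B_\varepsilon\) is homeomorphic to \(\operatorname{Cone}(L_\varepsilon)\) by a homeomorphism sending \(0\) to the vertex \(v\). By excision, the local homology then equals \(H_j(\operatorname{Cone}(L_\varepsilon),\operatorname{Cone}(L_\varepsilon)\setminus\{v\})\).

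\textbf{Step 3 (long exact sequence).} The cone is contractible, and the punctured cone deformation retracts onto \(L_\varepsilon\), as recalled before the lemma. The long exact sequence of the pair in reduced homology therefore gives
\[
H_j(\operatorname{Cone}(L_\varepsilon),\operatorname{Cone}(L_\varepsilon)\setminus\{v\})\cong \widetilde H_{j-1}(L_\varepsilon;\mathbb Z).
\]
Comparing with Step 1 yields \(\widetilde H_2(L_\varepsilon)\cong\mathbb Z\) and \(\widetilde H_j(L_\varepsilon)=0\) for \(j\ne2\). The same argument in degree \(j=1\) shows that \(L_\varepsilon\) is nonempty and connected. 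For the Euler characteristic, note that \(L_\varepsilon\) is a compact semianalytic set, hence triangulable, so its homology is finitely generated and \(\chi(L_\varepsilon)=1+\operatorname{rank}\widetilde H_2=2\).

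\textbf{Main obstacle.} The delicate step is Step 2: justifying that the local homology of the abstract germ \((X,0)\) coincides with that of the cone on \(L_\varepsilon\) for every small admissible \(\varepsilon\), not just for one radius. I would cite the conic structure theorem, which holds for all \(\varepsilon\) below some \(\varepsilon_0\). The remaining ingredient is the independence of local homology from the choice of neighborhood; this follows from excision, since \(\{0\}\) is closed and the neighborhoods are open. Note that only the topology of \((X,0)\) itself is used, not the ambient embedding, so local flatness enters solely through Step 1.
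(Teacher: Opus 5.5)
Your proposal is correct and follows essentially the same route as the paper. You compute the local homology of \(X\) at \(0\) twice: once from the flat model \((\R^3,\R^3\setminus\{0\})\), and once from the conic structure \((\operatorname{Cone}(L_\varepsilon),\operatorname{Cone}(L_\varepsilon)\setminus\{v\})\). In both cases the long exact sequence of a pair with contractible total space gives the degree shift, and triangulability of the compact subanalytic link then yields \(\chi(L_\varepsilon)=2\). Your explicit appeal to excision, used to move between neighborhoods of \(0\), is a detail the paper leaves implicit.
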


\begin{proof}
Choose \(\varepsilon>0\) sufficiently small so that both the local-flatness
description and the local conical structure of \(X\) are valid in
\(B_\varepsilon\).

Since the embedded germ \((\mathbb R^4,X,0)\) is locally flat, a sufficiently
small neighborhood of \(0\) in \(X\) is homeomorphic to a sufficiently small
neighborhood of \(0\) in the hyperplane
\[
\mathbb R^3\times\{0\}\subset\mathbb R^4.
\]
Hence the local homology groups of \(X\) at \(0\) coincide with those of
\(\mathbb R^3\) at the origin. More precisely,
\[
H_k\bigl(
X\cap B_\varepsilon,
(X\cap B_\varepsilon)\setminus\{0\};
\mathbb Z
\bigr)
\cong
H_k\bigl(
\mathbb R^3,
\mathbb R^3\setminus\{0\};
\mathbb Z
\bigr).
\]

We recall briefly why the relative homology groups on the right can be
computed from the homology of the punctured space. For every pair of
topological spaces \(B\subset A\), the relative singular chain group is
defined by
\[
C_k(A,B;\mathbb Z)
=
C_k(A;\mathbb Z)/C_k(B;\mathbb Z).
\]
Thus there is a short exact sequence of chain complexes
\[
0
\longrightarrow
C_\bullet(B;\mathbb Z)
\longrightarrow
C_\bullet(A;\mathbb Z)
\longrightarrow
C_\bullet(A,B;\mathbb Z)
\longrightarrow
0.
\]
The corresponding long exact sequence in homology is
\[
\cdots
\longrightarrow
H_k(B;\mathbb Z)
\longrightarrow
H_k(A;\mathbb Z)
\longrightarrow
H_k(A,B;\mathbb Z)
\overset{\partial}{\longrightarrow}
H_{k-1}(B;\mathbb Z)
\longrightarrow
H_{k-1}(A;\mathbb Z)
\longrightarrow
\cdots .
\]
Here \(\partial\) denotes the connecting homomorphism. The exactness of this
sequence means that, at each term, the image of the preceding map coincides
with the kernel of the following map.

In particular, if \(A\) is contractible, then its reduced homology groups
vanish. The long exact sequence therefore gives the standard degree-shift
isomorphism
\[
H_k(A,B;\mathbb Z)
\cong
\widetilde H_{k-1}(B;\mathbb Z).
\]

We now apply this observation to
\[
A=\mathbb R^3,
\qquad
B=\mathbb R^3\setminus\{0\}.
\]
The space \(\mathbb R^3\) is contractible, whereas
\(\mathbb R^3\setminus\{0\}\) deformation retracts radially onto \(S^2\).
Indeed, the radial projection
\[
r:\mathbb R^3\setminus\{0\}\longrightarrow S^2,
\qquad
r(x)=\frac{x}{\|x\|},
\]
is part of a deformation retraction. Consequently,
\[
\mathbb R^3\setminus\{0\}\simeq S^2,
\]
where \(\simeq\) denotes homotopy equivalence.

Hence
\[
H_k\bigl(
\mathbb R^3,
\mathbb R^3\setminus\{0\};
\mathbb Z
\bigr)
\cong
\widetilde H_{k-1}\bigl(
\mathbb R^3\setminus\{0\};
\mathbb Z
\bigr)
\cong
\widetilde H_{k-1}(S^2;\mathbb Z).
\]
Since the reduced integral homology of the \(2\)-sphere is
\[
\widetilde H_j(S^2;\mathbb Z)
\cong
\begin{cases}
\mathbb Z, & j=2,\\
0, & j\ne2,
\end{cases}
\]
the only nonzero relative homology group occurs when
\[
k-1=2,
\]
that is, when
\[
k=3.
\]
Therefore
\[
H_k\bigl(
X\cap B_\varepsilon,
(X\cap B_\varepsilon)\setminus\{0\};
\mathbb Z
\bigr)
\cong
\begin{cases}
\mathbb Z, & k=3,\\
0, & k\ne3.
\end{cases}
\tag{3.1}
\]

We next relate these local homology groups to the topology of the link
\(L_\varepsilon\). Choose a Whitney stratification of \(X\) for which
\(\{0\}\) is a stratum. By the local conical structure theorem for real
analytic, and more generally subanalytic, sets, for every sufficiently small
admissible \(\varepsilon>0\) there is a homeomorphism of pairs
\[
\bigl(
X\cap B_\varepsilon,
(X\cap B_\varepsilon)\setminus\{0\}
\bigr)
\cong
\bigl(
\operatorname{Cone}(L_\varepsilon),
\operatorname{Cone}(L_\varepsilon)\setminus\{v\}
\bigr),
\]
where
\[
L_\varepsilon=X\cap S^3_\varepsilon
\]
and \(v\) denotes the vertex of the cone; see
\cite{Lojasiewicz1964},
\cite[Sec.~9.3]{BochnakCosteRoy1998}, and
\cite{BurgheleaVerona1972}.

Thus, topologically, a sufficiently small neighborhood of the origin in
\(X\) is obtained by taking the link \(L_\varepsilon\) and forming its cone,
with the origin corresponding to the cone vertex.

Set
\[
C=\operatorname{Cone}(L_\varepsilon)
\]
and
\[
C^\ast=C\setminus\{v\}.
\]
The cone \(C\) is contractible, and therefore
\[
\widetilde H_j(C;\mathbb Z)=0
\]
for every \(j\). Moreover,
\[
C^\ast\cong L_\varepsilon\times(0,1],
\]
and hence \(C^\ast\) deformation retracts onto
\[
L_\varepsilon\times\{1\}\cong L_\varepsilon.
\]
Consequently,
\[
C^\ast\simeq L_\varepsilon.
\]

Applying the preceding general observation to the pair \((C,C^\ast)\), and
using the contractibility of \(C\), we obtain
\[
H_k(C,C^\ast;\mathbb Z)
\cong
\widetilde H_{k-1}(C^\ast;\mathbb Z).
\]
Since \(C^\ast\simeq L_\varepsilon\), homotopy invariance of singular
homology gives
\[
\widetilde H_{k-1}(C^\ast;\mathbb Z)
\cong
\widetilde H_{k-1}(L_\varepsilon;\mathbb Z).
\]
Therefore
\[
H_k\bigl(
\operatorname{Cone}(L_\varepsilon),
\operatorname{Cone}(L_\varepsilon)\setminus\{v\};
\mathbb Z
\bigr)
\cong
\widetilde H_{k-1}(L_\varepsilon;\mathbb Z).
\tag{3.2}
\]

Combining the local-flatness computation in \((3.1)\) with the conical
identification in \((3.2)\), we obtain
\[
\widetilde H_{k-1}(L_\varepsilon;\mathbb Z)
\cong
\begin{cases}
\mathbb Z, & k=3,\\
0, & k\ne3.
\end{cases}
\]
Putting
\[
j=k-1,
\]
we conclude that
\[
\widetilde H_j(L_\varepsilon;\mathbb Z)
\cong
\begin{cases}
\mathbb Z, & j=2,\\
0, & j\ne2.
\end{cases}
\]

Thus \(L_\varepsilon\) has the integral homology of \(S^2\). Equivalently,
its ordinary homology groups satisfy
\[
H_0(L_\varepsilon;\mathbb Z)\cong\mathbb Z,
\]
\[
H_1(L_\varepsilon;\mathbb Z)=0,
\]
and
\[
H_2(L_\varepsilon;\mathbb Z)\cong\mathbb Z,
\]
while
\[
H_j(L_\varepsilon;\mathbb Z)=0,
\qquad j\ge3.
\]

Finally, \(L_\varepsilon\) is a compact subanalytic set. Hence it is
triangulable and has the homotopy type of a finite CW complex. Its Euler
characteristic can therefore be computed from its homology by
\[
\chi(L_\varepsilon)
=
\sum_{j\ge0}
(-1)^j
\operatorname{rank}
H_j(L_\varepsilon;\mathbb Z).
\]
Using the homology groups above, we obtain
\[
\chi(L_\varepsilon)
=
1-0+1
=
2.
\]
This proves the assertion.
\end{proof}

\begin{proposition}\label{prop:real-part-link}
Let $f$ be a nonconstant holomorphic function defined near $0\in \mathbb C^2$, with
\[
        f(0)=0.
\]
Let
\[
        X=\{z\in\mathbb C^2:\operatorname{Re} f(z)=0\}
\]
as a germ at $0$. If the embedded germ $(\mathbb R^4,X,0)$ is homeomorphic, as a germ of pairs, to $(\mathbb R^4,H,0)$, where $H\subset\mathbb R^4$ is a real hyperplane, then
\[
        df(0)\ne 0.
\]
Equivalently, if $df(0)=0$, then the germ $(\mathbb R^4,X,0)$ is not locally homeomorphic, as an embedded pair, to a flat real hyperplane germ.
\end{proposition}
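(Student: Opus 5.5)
We argue by contradiction. Assume \(df(0)=0\) and that \((\R^4,X,0)\) is locally flat. By Lemma~\ref{lem:flat-link-homology}, every sufficiently small admissible link \(L_\varepsilon=X\cap S^3_\varepsilon\) has the integral homology of \(S^2\), so \(\chi(L_\varepsilon)=2\). We then use the local factorization \(f=u f_1^{m_1}\cdots f_r^{m_r}\) of Theorem~\ref{th:local-factorization} and split into the reduced case (all \(m_j=1\)) and the nonreduced case (some \(m_j\ge2\)).

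\emph{Reduced case.} Here \(f^{-1}(0)\) is a reduced plane curve germ with an isolated singularity at \(0\), because \(df(0)=0\). Milnor's fibration theorem then gives, for small \(\varepsilon\), a locally trivial fibration \(\arg f: S^3_\varepsilon\setminus K\to S^1\), where \(K=f^{-1}(0)\cap S^3_\varepsilon\) is the link of the curve, a disjoint union of \(r\) circles. The closures of the pages are compact surfaces \(\overline{F_\theta}\) with boundary \(K\). Each has Euler characteristic \(1-\mu\), where \(\mu\ge1\) is the Milnor number; \(\mu\ge1\) holds precisely because \(0\) is a critical point. The set \(L_\varepsilon=\{\re f=0\}\cap S^3_\varepsilon\) equals \(K\cup F_{\pi/2}\cup F_{-\pi/2}\), i.e.\ two closed pages glued along their common binding \(K\). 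Since \(\chi(K)=0\), inclusion--exclusion (Mayer--Vietoris) gives
\[
\chi(L_\varepsilon)=2(1-\mu)-0=2-2\mu\le 0,
\]
which contradicts \(\chi(L_\varepsilon)=2\).

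One point needs care: this \(\varepsilon\) must be admissible both for Milnor's theorem and for Lemma~\ref{lem:flat-link-homology}. Both statements hold for all sufficiently small \(\varepsilon\), so we simply take \(\varepsilon\) below both thresholds. We also need the closed page to be a genuine surface with boundary \(K\) near the binding, which is standard in Milnor's open-book description.

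\emph{Nonreduced case.} Suppose some \(m_j\ge2\), and set \(g=f_j\). At a smooth point \(q\ne0\) of the branch \(\{g=0\}\) that lies on no other branch, we can choose local holomorphic coordinates \((w_1,w_2)\) with \(g=w_1\). Then \(f=v\,w_1^{m}\) with \(v(q)\ne0\), and after taking an \(m\)-th root of \(v\) we get \(f=w_1^m\). Near \(q\) the set \(\{\re f=0\}\) is therefore \(\{\re w_1^m=0\}\times\C\), i.e.\ \(2m\) half-planes meeting along \(\{w_1=0\}\). Such points \(q\) accumulate at \(0\).

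This produces a contradiction with local flatness. A locally flat hypersurface is a topological \(3\)-manifold near \(0\), hence near every point of \(X\) sufficiently close to \(0\). But near \(q\), \(X\) is a product of \(\C\) with a star of \(2m\ge4\) rays, and this is not a \(3\)-manifold at \(q\): removing \(q\)'s local line \(\{w_1=0\}\) locally disconnects \(X\) into \(2m\) pieces. Equivalently, the complement of \(X\) near \(q\) has \(2m>2\) local components, whereas a flat hyperplane has exactly two.

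\emph{Main obstacle.} We expect the main difficulty to be the reduced case: identifying \(L_\varepsilon\) rigorously as the union of two closed Milnor pages glued along \(K\), with compatible choices of \(\varepsilon\), so that the Euler characteristic formula \(\chi(L_\varepsilon)=2-2\mu\) is legitimate.
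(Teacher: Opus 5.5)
Your proposal is correct and follows the paper's proof essentially step for step. The nonreduced case is the same: the normal form $f=w^{m}$ at a smooth point of a multiple branch gives $2m\ge4$ local complementary components, against two for a flat hyperplane. The reduced case is also the same: writing $L_\varepsilon$ as two closed Milnor pages glued along $K$ gives $\chi(L_\varepsilon)=2(1-\mu)\le0$, contradicting $\chi(L_\varepsilon)=2$ from Lemma~\ref{lem:flat-link-homology}.

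Your side remark that $X$ fails to be a $3$-manifold at $q$ because deleting $\{w_1=0\}$ leaves $2m$ pieces would need its own justification, for instance $H_3(X,X\setminus\{q\};\mathbb Z)\cong\mathbb Z^{2m-1}$. It is not needed, however, since your ``equivalently'' complement count is exactly the paper's argument.
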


\begin{proof}
We prove the contrapositive. Assume that
\[
        df(0)=0
\]
and suppose, toward a contradiction, that \(X\) is locally flat at
\(0\).

After shrinking the representative neighborhood, factor
\[
        f=u f_1^{m_1}\cdots f_r^{m_r},
\]
where \(u\) is a nowhere-zero holomorphic unit, the \(f_j\) are
pairwise nonassociated irreducible germs, and \(m_j\ge1\).

\smallskip
\noindent\emph{The nonreduced case.}
Suppose first that \(m_j\ge2\) for some \(j\). Choose a point
\[
        p\in\{f_j=0\},\qquad p\ne0,
\]
arbitrarily close to \(0\), such that \(p\) is a smooth point of the
branch \(\{f_j=0\}\) and lies on no other branch. Such points exist
because an irreducible plane curve germ is smooth away from the
origin after shrinking the representative, and distinct local
branches meet only at the origin after a further shrinking.

Near \(p\), choose holomorphic coordinates \((\eta,w)\), centered at
\(p\), in which
\[
        \{f_j=0\}=\{w=0\}.
\]
All nonvanishing factors can be absorbed into a holomorphic unit, and
after taking a local \(m_j\)-th root of that unit and changing the
\(w\)-coordinate, we obtain
\[
        f(\eta,w)=w^{m_j}.
\]
Hence, near \(p\),
\[
        X=\{\operatorname{Re}(w^{m_j})=0\}.
\]
Writing \(w=re^{i\theta}\), the zero set in the transverse
\(w\)-plane consists of the \(2m_j\) half-rays
\[
        \theta=\frac{\pi/2+\ell\pi}{m_j},
        \qquad
        \ell=0,\ldots,2m_j-1.
\]
Therefore the germ of the complement of \(X\) at \(p\) has
\(2m_j\) local connected components: locally the \(\eta\)-direction
is a product factor, while the \(w\)-plane is divided into exactly
\(2m_j\) sectors by the above rays. Since \(m_j\ge2\), there are at
least four local complementary components.

On the other hand, local flatness at \(0\) means that there are
neighborhoods \(U,V\) of \(0\) and a homeomorphism of pairs
\[
        (U,X\cap U)\longrightarrow(V,H\cap V),
\]
where \(H\) is a real hyperplane. Choosing \(p\) sufficiently close
to \(0\), we have \(p\in U\). Restricting this homeomorphism to a
small neighborhood of \(p\) shows that \(X\) is locally flat at
\(p\). But the germ of the complement of a real hyperplane has
exactly two local connected components. This contradicts
\(2m_j\ge4\).

Thus the nonreduced case is impossible.

\smallskip
\noindent\emph{The reduced case.}
We are left with
\[
        f=u f_1\cdots f_r.
\]
Thus the plane curve germ \(f^{-1}(0)\) is reduced. Since
\(df(0)=0\), the curve is singular at the origin. We record why the
critical point of \(f\) is isolated. If the critical set of \(f\) had a
positive-dimensional analytic component \(C\) through \(0\), then
\(df\) would vanish identically on the smooth part of \(C\), so \(f\)
would be constant on \(C\). Since \(f(0)=0\), this would imply
\(C\subset f^{-1}(0)\). Every point of \(C\) would then be a singular
point of the curve \(f^{-1}(0)\), contradicting the fact that a reduced
plane curve germ has zero-dimensional singular set. Hence, after
shrinking the representative, \(0\) is the only critical point of
\(f\). Equivalently, the Jacobian ideal
\[
        J(f)=\left(
        \frac{\partial f}{\partial z_1},
        \frac{\partial f}{\partial z_2}
        \right)
        \subset \mathbb C\{z_1,z_2\}
\]
has finite codimension. Thus the Milnor number
\[
        \mu(f)=
        \dim_{\mathbb C}
        \frac{\mathbb C\{z_1,z_2\}}{J(f)}
\]
is finite. See, for example, \cite[Secs.~4--7]{Milnor1968},
\cite[Ch.~3]{Dimca1992}, and \cite[Ch.~5]{Wall2004}.

Choose a sufficiently small admissible sphere
\(S^3_\varepsilon\), and set
\[
        K=f^{-1}(0)\cap S^3_\varepsilon,
        \qquad
        L_\varepsilon=X\cap S^3_\varepsilon.
\]
Since \(X\) is assumed locally flat at \(0\),
Lemma~\ref{lem:flat-link-homology} gives
\[
        \chi(L_\varepsilon)=2.
        \tag{3.3}
\]

Milnor's fibration theorem gives
\[
        \frac{f}{|f|}:S^3_\varepsilon\setminus K\longrightarrow S^1.
\]
If \(P_\theta\) denotes the closure of the fiber with argument
\(\theta\), then
\[
        \partial P_\theta=K,
\]
and all pages are homeomorphic compact connected orientable
surfaces. Moreover, away from \(K\), the equation
\(\operatorname{Re}f=0\) is equivalent to
\[
        \arg f=\frac{\pi}{2}
        \quad\text{or}\quad
        \arg f=\frac{3\pi}{2}.
\]
Consequently,
\[
        L_\varepsilon
        =
        P_{\pi/2}\cup_K P_{3\pi/2}.
        \tag{3.4}
\]
Here \(P_{\pi/2}\) and \(P_{3\pi/2}\) are the closed pages of the
Milnor open book; their interiors are disjoint, and their intersection
is precisely their common boundary \(K\). Since \(K\) is a finite
disjoint union of circles,
\[
        \chi(K)=0.
\]
Using additivity of Euler characteristic in (3.4) and writing
\(P=P_{\pi/2}\), we obtain
\[
        \chi(L_\varepsilon)=2\chi(P).
        \tag{3.5}
\]

The page \(P\) is homotopy equivalent to the Milnor fiber of \(f\),
which has the homotopy type of a bouquet of \(\mu(f)\) circles.
Therefore
\[
        \chi(P)=1-\mu(f).
        \tag{3.6}
\]
Combining (3.3)--(3.6) gives
\[
        2
        =
        \chi(L_\varepsilon)
        =
        2(1-\mu(f)),
\]
and hence
\[
        \mu(f)=0.
\]

This is impossible under the assumption \(df(0)=0\). Indeed,
\[
        \frac{\partial f}{\partial z_1}(0)
        =
        \frac{\partial f}{\partial z_2}(0)
        =0,
\]
so both partial derivatives belong to the maximal ideal
\[
        \mathfrak m=(z_1,z_2)
        \subset\mathbb C\{z_1,z_2\}.
\]
Hence
\[
        \left(
        \frac{\partial f}{\partial z_1},
        \frac{\partial f}{\partial z_2}
        \right)
        \subset\mathfrak m
        \ne\mathbb C\{z_1,z_2\},
\]
and therefore
\[
        \mu(f)\ge1,
\]
contrary to \(\mu(f)=0\).

Both the nonreduced and the reduced cases lead to contradictions.
Thus \(X=\{\operatorname{Re}f=0\}\) cannot be locally flat at a
critical point of \(f\). This proves the proposition.
\end{proof}

\begin{remark}\label{rem:why-link-not-components}
The proof uses invariants forced by local flatness, rather than merely
the number of sectors seen in one-dimensional complex slices. In the
nonreduced case, an exact local product normal form shows directly that
the ambient complement has more than two local components. In the
reduced case, local flatness determines the local homology and therefore
the homology, in particular the Euler characteristic, of a sufficiently
small admissible link. For instance,
\[
        \{\re(z_1^2+z_2^2)=0\}
\]
has a torus as its link on a small sphere, and hence Euler
characteristic \(0\), whereas local flatness would force Euler
characteristic \(2\).
\end{remark}

\section{Proof of the Lewy-type theorem}

We now prove Theorem~\ref{th:main}.

\begin{proof}[Proof of Theorem~\ref{th:main}]
We give all details of the local reduction. The assertion is local at
$p$. Hence we may replace $\Omega$ by a smaller neighborhood of $p$ at
any point of the proof.

First translate the source and the target. Thus we may assume that
\[
        p=0,
        \qquad
        F(0)=0.
\]
Translations preserve pluriharmonicity, local injectivity, and the value
of the real Jacobian determinant up to evaluation at the translated
point. Choose a ball
\[
        B=B(0,\rho),
        \qquad
        \overline B\subset \Omega,
\]
such that $F$ is one-to-one on $B$. We identify $\C^2$ with $\R^4$
and write
\[
        F=(u_1,u_2,u_3,u_4):B\subset\R^4\longrightarrow\R^4,
\]
where the $u_j$ are real-valued $C^2$ functions. Since $F$ is
pluriharmonic, each $u_j$ is pluriharmonic.

Suppose, toward a contradiction, that
\[
        J_F(0)=0.
\]
The Jacobian $J_F(0)$ is the determinant of the real linear map
\[
        dF(0):\R^4\longrightarrow\R^4.
\]
Thus $dF(0)$ is not invertible. Its image is a proper real linear
subspace of $\R^4$. Therefore the annihilator of $\operatorname{im}dF(0)$
in the dual space $(\R^4)^*$ is nontrivial. Hence there exists a
nonzero real linear functional
\[
        \lambda:\R^4\longrightarrow\R
\]
such that
\[
        \lambda(dF(0)v)=0
        \qquad
        \hbox{for every }v\in\R^4.
\]
Equivalently,
\[
        \lambda\circ dF(0)=0.
\]
Set
\[
        u=\lambda\circ F.
\]
Then $u$ is a real-valued $C^2$ function on $B$. By the chain rule,
\[
        du(0)=d(\lambda\circ F)(0)=\lambda\circ dF(0)=0.
\]
Also,
\[
        u(0)=\lambda(F(0))=0.
\]

The function $u$ is pluriharmonic. Indeed, since $\lambda$ is real
linear, there are real constants $a_1,a_2,a_3,a_4$ such that
\[
        \lambda(x_1,x_2,x_3,x_4)
        =a_1x_1+a_2x_2+a_3x_3+a_4x_4.
\]
Consequently
\[
        u=a_1u_1+a_2u_2+a_3u_3+a_4u_4.
\]
The class of real-valued pluriharmonic functions is closed under real
linear combinations, so $u$ is pluriharmonic.

We next show that $u$ is not identically zero in any neighborhood of
$0$. Since $F$ is continuous and one-to-one on the open set $B$,
invariance of domain implies that $F(B)$ is open in $\R^4$. Moreover,
$F:B\to F(B)$ is a homeomorphism: it is a continuous bijection onto its
image, and it is an open map by invariance of domain. If $u$ were
identically zero on some ball $B'\subset B$ centered at $0$, then
\[
        F(B')\subset H,
        \qquad
        H:=\{x\in\R^4:\lambda(x)=0\}.
\]
Here $H$ is a real hyperplane because $\lambda\ne0$. But $F(B')$ is
open in $\R^4$, while $H$ has empty interior in $\R^4$. This is
impossible. Hence $u$ is nonconstant.

After shrinking $B$ if necessary, we may assume that $B$ is simply
connected. Since $u$ is pluriharmonic, Lemma~\ref{lem:primitive} gives
a holomorphic function $f$ on $B$ such that
\[
        u=\re f.
\]
Because $u(0)=0$, we have $\re f(0)=0$. Thus $f(0)$ is purely
imaginary. Replacing $f$ by $f-f(0)$, which does not change its real
part, we may assume that
\[
        f(0)=0.
\]
The function $f$ is nonconstant; otherwise $u=\re f$ would be constant,
contrary to the preceding paragraph.

We now prove that $df(0)=0$. Let $v\in T_0\R^4$ be an arbitrary real
tangent vector. Since $u=\re f$,
\[
        du(0)[v]=d(\re f)(0)[v]=\re(df(0)[v]).
\]
But $du(0)=0$, so
\[
        \re(df(0)[v])=0
        \qquad
        \hbox{for every }v\in T_0\R^4.
\]
Since $f$ is holomorphic, the real differential $df(0)$ is complex
linear. Thus
\[
        df(0)[iv]=i\,df(0)[v].
\]
Applying the preceding identity to the real tangent vector $iv$ gives
\[
        0
        =\re(df(0)[iv])
        =\re(i\,df(0)[v])
        =-\im(df(0)[v]).
\]
Therefore both the real part and the imaginary part of $df(0)[v]$
vanish. Since $v$ was arbitrary,
\[
        df(0)=0.
\]

It remains to record exactly where local flatness enters. Let
\[
        H=\{x\in\R^4:\lambda(x)=0\}.
\]
Then $H$ is a real hyperplane through the origin. Since $F:B\to F(B)$
is a homeomorphism and $F(0)=0$, the restriction of $F$ gives a
homeomorphism of germs of pairs
\[
        (B,F^{-1}(H),0)
        \longrightarrow
        (F(B),H\cap F(B),0).
\]
The pair on the right is exactly the germ at $0$ of the real hyperplane
$H$ in $\R^4$. Hence the embedded germ $F^{-1}(H)$ is locally flat at
$0$.

On the other hand,
\[
        F^{-1}(H)
        =\{z\in B:\lambda(F(z))=0\}
        =\{z\in B:u(z)=0\}
        =\{z\in B:\re f(z)=0\}.
\]
Thus the real hypersurface germ $\{\re f=0\}$ is locally flat at
$0$.

We have obtained a nonconstant holomorphic function $f$ near
$0\in\C^2$ such that
\[
        f(0)=0,
        \qquad
        df(0)=0,
\]
and such that the embedded real hypersurface germ $\{\re f=0\}$ is
locally flat at $0$. This contradicts Proposition~\ref{prop:real-part-link}.
The contradiction proves that $J_F(0)\ne0$. Translating back, we obtain
\[
        J_F(p)\ne0.
\]
This proves the theorem.
\end{proof}

\begin{corollary}\label{cor:univalent}
Let $\Omega\subset\C^2$ be a domain, and let $F:\Omega\to\C^2$ be a
univalent $C^2$ pluriharmonic mapping. Then
\[
        J_F(z)\ne0,
        \qquad z\in\Omega.
\]
Consequently, $F$ is a local $C^1$ diffeomorphism at every point of
$\Omega$.
\end{corollary}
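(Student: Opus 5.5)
The corollary follows from Theorem~\ref{th:main}, applied pointwise, together with the inverse function theorem. Fix an arbitrary point \(z\in\Omega\). Since \(F\) is univalent on all of \(\Omega\), it is one-to-one on every neighborhood of \(z\) contained in \(\Omega\); for instance, it is one-to-one on a small ball \(B(z,\rho)\subset\Omega\). The hypotheses of Theorem~\ref{th:main} are therefore satisfied at \(p=z\): the map \(F\) is a \(C^2\) pluriharmonic mapping on a domain and is one-to-one near \(p\). The theorem gives \(J_F(z)\ne0\). Since \(z\) was arbitrary, \(J_F\) vanishes nowhere on \(\Omega\).

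For the second assertion, regard \(F\) as a \(C^2\) (in particular \(C^1\)) map \(\R^4\supset\Omega\to\R^4\). At every \(z\in\Omega\) the real differential \(dF(z)\) is invertible, because \(\det dF(z)=J_F(z)\ne0\). The real inverse function theorem then gives neighborhoods \(U\ni z\) and \(V\ni F(z)\) such that \(F:U\to V\) is a bijection with a \(C^1\) inverse; in fact the inverse is \(C^2\), since \(F\) is \(C^2\). Hence \(F\) is a local \(C^1\) diffeomorphism at each point of \(\Omega\).

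There is no genuine obstacle. All the substantive work, namely the reduction to \(\{\operatorname{Re} f=0\}\) and the topological obstruction of Proposition~\ref{prop:real-part-link}, is already contained in Theorem~\ref{th:main}. The only point to check is that global univalence implies the local injectivity hypothesis of that theorem, and it does so trivially, because a restriction of an injective map is injective.
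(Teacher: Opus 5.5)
Your proposal is correct and is the paper's own argument: apply Theorem~\ref{th:main} at each point, using that global univalence gives injectivity near every point, and then invoke the real inverse function theorem. Your additional remark that the local inverse is in fact $C^2$ is also correct.
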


\begin{proof}
Apply Theorem~\ref{th:main} at each point. The final assertion follows
from the inverse function theorem.
\end{proof}
\begin{remark}\label{rem:scope-and-context}
Naser \cite{Naser1973} stated a Lewy-type assertion for pluriharmonic
mappings. The proof given above supplies a detailed local argument in
the case of mappings \(\C^2\to\C^2\), emphasizing the reduction to the
real hypersurface
\[
        \{\re f=0\}
\]
and the topology of plane curve singularities.

The proof is specific to \(\C^2\). The pluriharmonicity assumption makes
a real linear projection of \(F\) locally equal to the real part of a
holomorphic function. In two complex variables, the critical zero set of
such a function is governed by the topology of plane curve singularities:
the link lies in \(S^3\), and Milnor fibers are compact orientable
surfaces. In higher complex dimension, the corresponding links and
Milnor fibers have higher dimension, and the argument above does not by
itself prove a Lewy theorem.

Wood's example \cite{Wood1991} shows that the direct higher-dimensional
analogue of Lewy's planar theorem is false for harmonic homeomorphisms.
Thus the theorem above should be viewed not as a consequence of ordinary
harmonicity alone, but as a consequence of the additional complex
structure encoded in pluriharmonicity.
\end{remark}

\section{Quasiconformal harmonic local diffeomorphisms onto Dini-smooth domains}

We now prove the bi-Lipschitz application.  The point of the formulation
below is that the analytic part needed in this section is only local
nondegeneracy: once a harmonic quasiconformal homeomorphism is known to be a
local diffeomorphism, Dini smoothness of the image domain gives the global
bi-Lipschitz conclusion.  The Lewy theorem proved in this paper is then used
only to verify the local-diffeomorphism hypothesis for pluriharmonic mappings
in \(\C^2\).  In the planar case the same hypothesis follows from Lewy's
classical theorem.

Throughout this section, if \(D\subset\R^m\) is a domain, we write
\[
        \delta_D(x)=\operatorname{dist}(x,\partial D).
\]
If \(A:\R^m\to\R^m\) is linear, set
\[
        |A|=\sup_{|v|=1}|Av|,
        \qquad
        l(A)=\inf_{|v|=1}|Av|.
\]

\begin{definition}[Dini modulus and \(C^1\)-Dini boundary]
\label{def:dini-boundary}
A modulus of continuity is a continuous increasing function
\[
        \omega:[0,\omega_0)\to[0,\infty),
        \qquad
        \omega(0)=0.
\]
It is called a Dini modulus if
\[
        \int_0^{\omega_0}\frac{\omega(t)}{t}\,dt<\infty .
\]
A bounded domain \(\Omega\subset\R^m\) has \(C^1\)-Dini boundary if, in a
finite system of boundary charts, \(\partial\Omega\) is represented as the
graph of a \(C^1\) function whose gradient has a common Dini modulus of
continuity.
\end{definition}

\begin{lemma}[Regularized defining function]
\label{lem:regularized-defining-function-dini}
Let \(\Omega\subset\R^m\) be a bounded domain with \(C^1\)-Dini boundary.
Let
\[
        d(y)=\operatorname{dist}(y,\partial\Omega),
        \qquad y\in\Omega .
\]
Then there exist constants \(\varepsilon_0>0\), \(C_0\ge1\), a Dini
modulus \(\omega\), and a function
\[
        \rho\in C^\infty(\Omega), \qquad \rho>0,
\]
such that, whenever \(0<d(y)<\varepsilon_0\),
\[
        C_0^{-1}d(y)\le \rho(y)\le C_0d(y),
\]
\[
        C_0^{-1}\le |\nabla\rho(y)|\le C_0,
\]
and
\[
        |D^2\rho(y)|
        \le
        C_0\frac{\omega(\rho(y))}{\rho(y)}.
\]
Moreover,
\[
        \int_0^{\varepsilon_0}\frac{\omega(t)}{t}\,dt<\infty .
\]
\end{lemma}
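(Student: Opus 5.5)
The plan is to build \(\rho\) by the standard regularization of the signed distance in the Lieberman style, working in each boundary chart and then patching with a partition of unity.

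\textbf{Local model.} Near a boundary point, after a rigid motion, take
\[
\Omega\cap U=\{(x',x_m): x_m>\phi(x')\}\cap U ,
\]
where \(\phi\in C^1\) and \(\nabla\phi\) has Dini modulus \(\omega_1\). Fix a mollifier \(\eta\in C_c^\infty(B(0,1)\subset\R^{m-1})\) with \(\int\eta=1\), and define \(\rho\) implicitly by
\[
x_m-\int\phi(x'-c\,\rho\,z)\,\eta(z)\,dz=\rho ,
\]
where \(c>0\) is small. This is the function \(G(x,\rho)\) of Lieberman's regularized distance. The map \(t\mapsto x_m-(\phi*\eta_{ct})(x')-t\) is strictly decreasing when \(c\|\nabla\phi\|_\infty<1/2\), since its \(t\)-derivative is \(-1+O(c\|\nabla\phi\|)\). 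Hence there is a unique root, and it is smooth by the implicit function theorem because \(\phi*\eta_{ct}\) is smooth in \((x',t)\) for \(t>0\).

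\textbf{First-order estimates.} Since \(|\phi*\eta_{ct}-\phi|\le ct\|\nabla\phi\|\), we get \(\rho\approx x_m-\phi(x')\). The latter is comparable to \(d\) by the Lipschitz graph geometry, which gives \(C_0^{-1}d\le\rho\le C_0d\). Differentiating the implicit equation gives
\[
\nabla\rho=\frac{(-\nabla\phi*\eta_{c\rho},\,1)}{1+O(c\|\nabla\phi\|)} ,
\]
which yields two-sided gradient bounds.

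\textbf{Second derivatives.} This is the main step. Second derivatives of \(\phi*\eta_t\) are controlled by differentiating once more on the mollifier and using \(\int\partial\eta=0\):
\[
|D^2(\phi*\eta_t)(x')|=\Bigl|t^{-1}\int(\nabla\phi(x'-tz)-\nabla\phi(x'))\otimes\nabla\eta(z)\,dz\Bigr|\le C\,\omega_1(t)/t .
\]
The same argument applies to the mixed \(t\)-derivatives \(\partial_t\nabla(\phi*\eta_t)\) and \(\partial_t^2(\phi*\eta_t)\). Here one writes, for example, \(\partial_t(\phi*\eta_t)=\int\nabla\phi(x'-tz)\cdot(-z)\eta\,dz\) and subtracts \(\nabla\phi(x')\cdot\int z\eta\). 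For this, choose \(\eta\) even so that \(\int z\eta=0\). All of these are \(O(\omega_1(t)/t)\). Plugging them into the twice-differentiated implicit equation, with \(t=c\rho\), gives \(|D^2\rho|\le C\omega_1(c\rho)/\rho\le C\omega_1(\rho)/\rho\) by monotonicity.

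\textbf{Globalization.} Take \(\omega\) to be a Dini modulus dominating all the chart moduli, for instance their maximum times a constant. The last claim, \(\int_0^{\varepsilon_0}\omega(t)/t\,dt<\infty\), is then immediate from the Dini property.

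To globalize, cover \(\{d<\varepsilon_0\}\) by finitely many chart neighborhoods and glue the local \(\rho_k\) with a smooth partition of unity \(\{\psi_k\}\). Set \(\rho=\sum\psi_k\rho_k\) on the collar and extend \(\rho\) smoothly and positively inside. Then
\[
D^2\rho=\sum\bigl(\psi_kD^2\rho_k+2\nabla\psi_k\otimes\nabla\rho_k+\rho_kD^2\psi_k\bigr).
\]
The extra terms are \(O(1)\), and \(O(1)\le C\omega(\rho)/\rho\) provided \(\omega(t)\ge t\) near \(0\). We arrange this by replacing \(\omega\) with \(\omega(t)+t\), which is still Dini.

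The lower gradient bound survives the gluing. All the \(\nabla\rho_k\) at a point are close to the common inner normal direction \(\nu\) up to \(O(c)\) errors and the small variation of \(\nabla\phi\). So \(\nabla\rho\cdot\nu\ge 1/2\), up to the bounded \(\rho_k\nabla\psi_k=O(\varepsilon_0)\) terms, for \(\varepsilon_0\) small.

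I expect the main obstacle to be this gluing compatibility together with the uniform control of the \(t\)-derivatives in the second-derivative estimate.
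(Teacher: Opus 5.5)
Your proposal is correct and follows the paper's route. The paper's proof simply invokes Lieberman's regularized distance, and you have written that construction out: implicit mollification at scale $c\rho$, with derivatives moved onto the kernel to get the $\omega(\rho)/\rho$ bound. The only difference is that you regularize graph charts and glue them with a partition of unity, rather than regularizing a single global $C^1$-Dini defining function as the paper's sketch does. That is why you need the harmless replacement of $\omega$ by $\omega(t)+t$ and the normal-direction argument for $|\nabla\rho|\ge C_0^{-1}$.

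One small precision concerns the mixed and second $t$-derivatives. The condition $\int z\eta=0$ only helps the first derivative $\partial_t(\phi*\eta_t)$. For $\partial_t\nabla(\phi*\eta_t)$ and $\partial_t^2(\phi*\eta_t)$, put the $t$-derivative on the rescaled kernels $t^{1-m}\eta(\cdot/t)$ and $t^{1-m}(z\eta)(\cdot/t)$. Their $t$-derivatives have integral zero and $L^1$-norm $O(1/t)$, so evenness of $\eta$ is not actually needed for the second-order bounds.
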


\begin{proof}
This is the regularized distance theorem of Lieberman
\cite{Lieberman1985}.  A \(C^1\)-Dini domain admits, near its boundary, a
\(C^1\) defining function whose gradient has a Dini modulus of continuity
and is bounded away from zero on the boundary.  Lieberman's construction
regularizes this defining function inside \(\Omega\), gives comparability
with the distance to the boundary, and yields the displayed second-derivative
estimate.  The Dini condition is preserved after changing the argument of the
modulus by fixed multiplicative constants.
\end{proof}

\begin{lemma}[Boundary-distance estimate]
\label{lem:boundary-distance-dini}
Let \(\Omega\subset\R^m\) be a bounded domain with \(C^1\)-Dini boundary,
and let
\[
        f:B^m\to\Omega
\]
be a harmonic \(K\)-quasiconformal homeomorphism which is a local
\(C^1\)-diffeomorphism in \(B^m\). Then there are constants \(c,C>0\) such
that
\[
        c(1-|x|)
        \le
        \delta_\Omega(f(x))
        \le
        C(1-|x|),
        \qquad x\in B^m .
\]
\end{lemma}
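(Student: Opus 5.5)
The plan is to treat the two inequalities separately. The upper bound \(\delta_\Omega(f(x))\le C(1-|x|)\) uses only quasiconformality and harmonicity, while the lower bound is where the \(C^1\)-Dini regularity of \(\partial\Omega\) enters, through the regularized defining function \(\rho\) of Lemma~\ref{lem:regularized-defining-function-dini}. Throughout I use that a quasiconformal homeomorphism \(f:B^m\to\Omega\) onto a bounded domain with \(C^1\)-Dini boundary extends to a homeomorphism \(\overline{B^m}\to\overline\Omega\), since Dini-smooth domains are quasiballs. I also use that \(f\) quasi-preserves the quasihyperbolic metric, with the Gehring--Osgood estimate
\[
k_\Omega(f(x),f(y))\le C\max\{k_B(x,y),k_B(x,y)^{\alpha}\}, \qquad \alpha=K^{1/(1-m)},
\]
and the reverse inequality for \(f^{-1}\).

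\emph{Upper bound.} Fix \(x\) with \(1-|x|\) small and put \(r=(1-|x|)/2\). On \(B(x,r)\) the harmonic map \(f\) is bounded by \(\operatorname{diam}\Omega\), so interior gradient estimates give \(|Df|\le C/r\) on \(B(x,r/2)\). Quasiconformality (a \(K\)-qc map of a ball into a bounded domain, combined with the local \(C^1\) hypothesis and \(|Df|^m\le K J_f\)) then gives the standard estimate
\[
\delta_\Omega(f(x))\asymp |Df(x)|\,(1-|x|),
\]
valid up to constants depending on \(K,m\). This is the usual "distance to boundary is comparable to derivative times distance" statement for quasiconformal maps, derived from the quasisymmetry of \(f\) on Whitney balls. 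So both bounds reduce to showing that \(|Df(x)|\) is bounded above and below. For the upper bound I would instead argue directly. The function \(v=\rho\circ f\) is positive, and the Dini estimate \(|D^2\rho|\le C\omega(\rho)/\rho\) together with \(\Delta f=0\) gives
\[
|\Delta v|\le C\,\frac{\omega(v)}{v}\,|Df|^2\lesssim \frac{\omega(v)}{v}\cdot\frac{v^2}{(1-|x|)^2}.
\]
Comparing \(v\) with barriers of the form \(\psi(1-|x|)\), where \(\psi(t)=t\exp\bigl(\pm C\int_0^t\omega(s)/s\,ds\bigr)\), the Dini integral makes these barriers comparable to \(t\). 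A maximum-principle argument on annuli \(\{1-2^{-k}<|x|<1-2^{-k-1}\}\) then gives \(v(x)\le C(1-|x|)\).

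\emph{Lower bound.} This is the main obstacle: superharmonic-type barriers only yield \(v\gtrsim (1-|x|)^{\beta}\) with \(\beta\ge1\) unless the error terms sum. I would first try the same construction with \(\psi_-(t)=t\exp(-C\int_0^t\omega(s)/s\,ds)\). Since \(|\Delta v|\) is controlled by \(\omega(v)/v\) times \(|Df|^2\), and \(|Df|^2\asymp v^2/(1-|x|)^2\) by the comparability above, we get the relative error \(|\Delta v|/ (v/(1-|x|)^2)\lesssim\omega(v)\). This is summable in dyadic scales precisely by the Dini condition.

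Concretely, I will show \(\log\bigl(v(x)/(1-|x|)\bigr)\) has bounded oscillation. On each Whitney ball, rescale \(f\) to unit size. By compactness of normalized \(K\)-qc harmonic maps, the rescaled height functions \(v\) are close to positive harmonic functions, with error \(O(\omega(2^{-k}))\). Harnack's inequality, together with the boundary Hopf/Poisson estimate for the ball, then gives multiplicative changes \(1+O(\omega(2^{-k}))\) from scale to scale. The product \(\prod_k(1+C\omega(2^{-k}))\) converges by the Dini condition, so \(v(x)\ge c(1-|x|)\). Finally, \(\rho\asymp d\) converts this to \(\delta_\Omega(f(x))\ge c(1-|x|)\). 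Compact subsets are trivial, since \(f\) is a homeomorphism and \(\delta_\Omega\circ f\) is positive and continuous there.
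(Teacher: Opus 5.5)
Your proposal has a genuine gap: neither comparison step closes, and the mechanism that makes the estimate work is missing.

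\textbf{Upper bound.} You compare $v=\rho\circ f$ with $M\psi(1-|x|)$, but the right-hand side of your inequality $\Delta v\ge -C\omega(v)v/(1-|x|)^2$ depends on the unknown $v$. At a positive interior maximum of $v-M\psi(1-|x|)$ you have $v>M\psi(1-|x|)$. There the error term is larger than the one the barrier was built to dominate. So the maximum principle gives a contradiction only if you already know $v\lesssim 1-|x|$, which is what you want to prove. The a priori quasiconformal bound $v\lesssim(1-|x|)^\alpha$ is far too weak for this.

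\textbf{Lower bound.} All the content is in the claim that $v/(1-|x|)$ changes by factors $1+O(\omega(2^{-k}))$ from scale to scale, and this is not supplied. Compactness gives no rate. Harnack on Whitney balls gives only a fixed factor per scale, hence only $v\gtrsim(1-|x|)^\beta$, as you observe yourself. A factor close to $1$ would need a quantitative boundary Harnack/Hopf estimate for Dini-perturbed harmonic functions on the whole collar, which is essentially the lemma itself.

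\textbf{Two smaller points.} First, the comparability $\delta_\Omega(f(x))\asymp|Df(x)|(1-|x|)$ is standard only in the direction $|Df(x)|(1-|x|)\lesssim\delta_\Omega(f(x))$, via the interior gradient estimate plus Gehring--Osgood. The reverse is a pointwise nondegeneracy that the paper obtains only later, in Theorem~\ref{th:harmonic-qc-local-diffeo-bilip}, using this lemma; fortunately you only use the first direction. Second, the upper bound does not follow from quasiconformality and harmonicity alone. A conformal map of the disk onto a bounded planar domain with a convex corner of opening $\gamma\pi$, $\gamma<1$, has $\delta_\Omega(f(x))\asymp(1-|x|)^\gamma$ along the radius ending at the preimage of the vertex.

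\textbf{The missing idea.} Bound $|Df|$ by $|\nabla v|$ instead of by $v/(1-|x|)$. Since $\nabla v=Df^{T}\nabla\rho(f)$ and $|\nabla\rho|\ge C_0^{-1}$, quasiconformality gives $|\nabla v|\ge C_0^{-1}l(Df)\ge (C_0K_1)^{-1}|Df|$, hence
\[
|\Delta v|\le A\,\frac{\omega(v)}{v}\,|\nabla v|^2 .
\]
An error proportional to $|\nabla v|^2$ is absorbed exactly by composing with a function of $v$. Set $I(t)=\int_0^t\omega(s)s^{-1}\,ds$ and $\Phi_\pm(t)=\int_0^t e^{\pm AI(s)}\,ds$, so that $\Phi_\pm''(t)=\pm A\frac{\omega(t)}{t}\Phi_\pm'(t)$. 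Then $\Phi_-(v)$ is superharmonic and $\Phi_+(v)$ is subharmonic in a collar $\{r_0<|x|<1\}$. Compare them with the radial harmonic function equal to $1$ on $|x|=r_0$ and $0$ on $|x|=1$, which is $\asymp 1-|x|$. This gives both inequalities at once, since $\Phi_\pm(t)\asymp t$ by the Dini condition.

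Your $\psi$ is essentially this $\Phi_\pm$. The point is that it must be applied to $v$, so that the term $\Phi''(v)|\nabla v|^2$ cancels the error, not to the source variable $1-|x|$. With that change no dyadic iteration is needed.
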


\begin{proof}
Let \(\rho\) be the regularized defining function from
Lemma~\ref{lem:regularized-defining-function-dini}. Since \(\Omega\) is
bounded and \(f\) is a homeomorphism,
\[
        \delta_\Omega(f(x))\to0
        \qquad\text{as } |x|\to1.
\]
Indeed, otherwise a sequence \(|x_j|\to1\) would have a subsequence with
\(f(x_j)\) converging to a point of \(\Omega\), contradicting continuity of
\(f^{-1}\).  Hence, for some \(r_0\in(0,1)\), the function
\[
        v(x)=\rho(f(x))
\]
is defined in the collar
\[
        A_{r_0}=\{x\in B^m:r_0<|x|<1\}.
\]

Since the coordinate functions of \(f\) are harmonic, the chain rule gives
\[
        \Delta v(x)
        =
        \sum_{\alpha,\beta=1}^m
        \rho_{\alpha\beta}(f(x))
        \langle \nabla f^\alpha(x),\nabla f^\beta(x)\rangle .
\]
Therefore
\[
        |\Delta v(x)|
        \le
        C |D^2\rho(f(x))|\,|Df(x)|^2 .
\]
By Lemma~\ref{lem:regularized-defining-function-dini},
\[
        |D^2\rho(f(x))|
        \le
        C\frac{\omega(v(x))}{v(x)}.
\]
Since \(f\) is a local diffeomorphism and \(K\)-quasiconformal, its
differential satisfies
\[
        |Df(x)|\le K_1 l(Df(x)),
\]
where \(K_1\) depends only on \(K\) and \(m\).  Moreover,
\[
\begin{aligned}
        |\nabla v(x)|
        &=|Df(x)^T\nabla\rho(f(x))|       \\
        &\ge l(Df(x))|\nabla\rho(f(x))|  \\
        &\ge C^{-1}l(Df(x)).
\end{aligned}
\]
Consequently,
\[
        |\Delta v(x)|
        \le
        A\frac{\omega(v(x))}{v(x)}|\nabla v(x)|^2,
        \qquad x\in A_{r_0},
\]
for a constant \(A>0\).

Put
\[
        \sigma(t)=\frac{\omega(t)}{t},
        \qquad
        I(t)=\int_0^t \sigma(s)\,ds.
\]
The Dini condition gives \(I(t)<\infty\) for small \(t\) and
\(I(t)\to0\) as \(t\downarrow0\).  Define
\[
        \Phi_-(t)=\int_0^t e^{-AI(s)}\,ds,
        \qquad
        \Phi_+(t)=\int_0^t e^{AI(s)}\,ds .
\]
After decreasing \(\varepsilon_0\), if necessary, both \(\Phi_-\) and
\(\Phi_+\) are comparable to \(t\) on \((0,\varepsilon_0)\).  Also,
\[
        \Phi_-''(t)=-A\sigma(t)\Phi_-'(t),
        \qquad
        \Phi_+''(t)=A\sigma(t)\Phi_+'(t).
\]

Set
\[
        w_-(x)=\Phi_-(v(x)),
        \qquad
        w_+(x)=\Phi_+(v(x)).
\]
Using the differential inequality for \(v\), we get
\[
        \Delta w_-\le0,
        \qquad
        \Delta w_+\ge0
\]
in \(A_{r_0}\).  Thus \(w_-\) is positive superharmonic and \(w_+\) is
positive subharmonic.

Let \(h\) be the radial harmonic function in
\[
        A_{r_0}=\{x\in \mathbb R^m:r_0<|x|<1\}
\]
which equals \(1\) on \(|x|=r_0\) and \(0\) on \(|x|=1\).  Writing
\(r=|x|\), this function is given explicitly by
\[
h(x)=H(r),
\]
where
\[
H(r)=
\begin{cases}
\dfrac{\log r}{\log r_0}
      =\dfrac{\log(1/r)}{\log(1/r_0)}, & m=2, \\[1.2em]
\dfrac{r^{2-m}-1}{r_0^{2-m}-1}, & m\ge 3 .
\end{cases}
\]
Indeed, these are precisely the radial harmonic functions in dimensions
\(2\) and \(m\ge 3\), respectively, and the constants are chosen so that
\(H(r_0)=1\) and \(H(1)=0\).

We shall use that
\[
        h(x)\asymp 1-|x|,
        \qquad r_0<|x|<1 .
\]
For \(m=2\), this follows from
\[
        \log(1/r)=\int_r^1 \frac{dt}{t},
\]
and hence, since \(r_0<r<1\),
\[
        1-r \le \log(1/r)\le \frac{1}{r_0}(1-r).
\]
Therefore
\[
        \frac{1}{\log(1/r_0)}(1-r)
        \le H(r)
        \le
        \frac{1}{r_0\log(1/r_0)}(1-r).
\]
For \(m\ge 3\), we have
\[
        r^{2-m}-1
        =
        \int_r^1 (m-2)t^{1-m}\,dt .
\]
Since \(r_0<r<t<1\), this gives
\[
        (m-2)(1-r)
        \le r^{2-m}-1
        \le
        (m-2)r_0^{1-m}(1-r).
\]
Consequently,
\[
        \frac{m-2}{r_0^{2-m}-1}(1-r)
        \le H(r)
        \le
        \frac{(m-2)r_0^{1-m}}{r_0^{2-m}-1}(1-r).
\]
Thus, in all dimensions \(m\ge 2\),
\[
        h(x)\asymp 1-|x|,
        \qquad r_0<|x|<1,
\]
with constants depending only on \(m\) and \(r_0\).

The functions \(w_-\) and \(w_+\) extend continuously by zero to the
outer boundary. On the compact inner sphere \(|x|=r_0\), set
\[
        m_-:=\min_{|x|=r_0}w_-(x)>0,
        \qquad
        M_+:=\max_{|x|=r_0}w_+(x)<\infty.
\]
Since \(h=1\) on \(|x|=r_0\) and \(h=0\) on \(|x|=1\), the minimum
principle applied to \(w_- -m_-h\) and the maximum principle applied to
\(w_+-M_+h\) give
\[
        m_-h(x)\le w_-(x),
        \qquad
        w_+(x)\le M_+h(x).
\]
Because \(\Phi_\pm(t)\asymp t\) and \(h(x)\asymp1-|x|\), it follows
that
\[
        c(1-|x|)\le v(x)\le C(1-|x|),
        \qquad r_0<|x|<1 .
\]
Since \(v=\rho\circ f\) and \(\rho\asymp\delta_\Omega\), this proves the
claimed estimate near the boundary.  On the compact ball
\(\overline{B^m_{r_0}}\) the estimate follows after changing the constants.
\end{proof}

\begin{lemma}[Lipschitz estimate]
\label{lem:lipschitz-dini}
Let \(\Omega\subset\R^m\) be a bounded \(C^1\)-Dini domain, and let
\[
        f:B^m\to\Omega
\]
be a harmonic \(K\)-quasiconformal homeomorphism which is a local
\(C^1\)-diffeomorphism. Then \(f\) is Lipschitz.
\end{lemma}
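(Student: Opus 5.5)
The plan is to combine the boundary-distance estimate of Lemma~\ref{lem:boundary-distance-dini} with interior gradient estimates for harmonic maps, in the quasihyperbolic form: a harmonic quasiconformal map distorts Whitney balls in a controlled way.

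Fix \(x\in B^m\) and set \(r=(1-|x|)/2\), so that \(B(x,r)\subset B^m\) and \(1-|y|\asymp 1-|x|\) for \(y\in B(x,r)\). The first step is the standard interior derivative estimate for each harmonic coordinate, applied after subtracting the constant \(f(x)\):
\[
        |Df(x)|\le \frac{C_m}{r}\sup_{y\in B(x,r)}|f(y)-f(x)| .
\]
So it suffices to show that the image of the Whitney ball \(B(x,r)\) has diameter at most a constant times \(r\).

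The second step bounds that diameter using quasiconformality. A \(K\)-quasiconformal homeomorphism between proper domains is quasihyperbolically bounded on unit scales; this is Gehring--Osgood, \(k_\Omega(f(a),f(b))\le C(K,m)\max\{k_{B}(a,b),k_B(a,b)^{\alpha}\}\). Points \(y\in B(x,r)\) satisfy \(k_B(x,y)\le C\). Hence \(k_\Omega(f(x),f(y))\le C'\). The elementary inequality
\[
        k_\Omega(p,q)\ge \log\Bigl(1+\frac{|p-q|}{\min\{\delta_\Omega(p),\delta_\Omega(q)\}}\Bigr)
\]
then gives \(|f(y)-f(x)|\le (e^{C'}-1)\,\delta_\Omega(f(x))\).

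The third step applies the upper half of Lemma~\ref{lem:boundary-distance-dini}. It gives \(\delta_\Omega(f(x))\le C(1-|x|)=2Cr\), so \(\sup_{B(x,r)}|f-f(x)|\le C''r\). Combined with the first step, this yields \(|Df(x)|\le C_mC''\) uniformly in \(x\).

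The final step turns the derivative bound into a Lipschitz bound. Since \(B^m\) is convex, integrating \(|Df|\) along segments gives \(|f(a)-f(b)|\le M|a-b|\). The main point needing care is the second step: the quasihyperbolic distortion bound must be stated for an arbitrary proper domain \(\Omega\). Gehring--Osgood provides this without any regularity of \(\Omega\), so the Dini hypothesis enters only through Lemma~\ref{lem:boundary-distance-dini}.

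If one prefers to avoid Gehring--Osgood, a fallback is available. Quasisymmetry of \(f\) on Whitney balls, which holds for \(K\)-quasiconformal maps relative to compact subballs, controls \(\operatorname{diam} f(B(x,r))\) by \(\operatorname{dist}(f(x),f(\partial B(x,2r)))\le \delta_\Omega(f(x))\) up to a constant. This gives the same conclusion.
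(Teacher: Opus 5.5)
Your proof is correct and follows the paper's argument step for step: the Whitney ball $B(x,(1-|x|)/2)$, Gehring--Osgood quasi-invariance of the quasihyperbolic metric, the lower bound $k_\Omega(p,q)\ge\log\bigl(1+|p-q|/\delta_\Omega(p)\bigr)$, the upper half of Lemma~\ref{lem:boundary-distance-dini}, and the interior gradient estimate for harmonic functions. Your $\min$ version of that lower bound is the standard $j$-metric inequality and is slightly stronger; the only blemish is in the optional quasisymmetry fallback, where $\partial B(x,2r)=\partial B(x,1-|x|)$ touches $\partial B^m$, so a slightly smaller outer radius should be used there (the main argument does not depend on it).
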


\begin{proof}
By Lemma~\ref{lem:boundary-distance-dini},
\[
        \delta_\Omega(f(x))\le C(1-|x|).
\]
Fix \(x\in B^m\), and put
\[
        r=\frac{1-|x|}{2}.
\]
If \(y\in B(x,r)\), then \(B(x,r)\subset B^m\) and
\[
        k_{B^m}(x,y)\le C.
\]
By the quasi-invariance of the quasihyperbolic metric under
quasiconformal mappings, due to Gehring and Osgood
\cite{GehringOsgood1979},
\[
        k_\Omega(f(x),f(y))\le C.
\]
For every domain \(D\) one has
\[
        k_D(a,b)
        \ge
        \log\left(1+\frac{|a-b|}{\delta_D(a)}\right).
\]
Therefore
\[
        |f(y)-f(x)|
        \le
        C\delta_\Omega(f(x))
        \le
        C(1-|x|),
        \qquad y\in B(x,r).
\]
The interior gradient estimate for harmonic functions gives
\[
        |Df(x)|
        \le
        \frac{C}{r}\sup_{B(x,r)}|f(y)-f(x)|
        \le C.
\]
Thus \(|Df|\) is bounded in \(B^m\), and \(f\) is Lipschitz.
\end{proof}

\begin{lemma}[A half-space Liouville lemma]
\label{lem:halfspace-liouville}
Let
\[
        H=\{x=(x',x_m)\in\R^m:x_m>0\}.
\]
If \(u\) is a positive harmonic function in \(H\) and
\[
        0<u(x)\le Cx_m,
        \qquad x\in H,
\]
then
\[
        u(x)=a x_m
\]
for some constant \(a\ge0\).  If in addition \(u(e_m)>0\), then
\(a>0\).
\end{lemma}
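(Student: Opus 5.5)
The plan is to use the Poisson–Herglotz representation of positive harmonic functions in a half-space and exploit the upper bound \(u\le Cx_m\) to rule out any singular boundary contribution. Since \(u>0\) is harmonic in \(H\), the half-space version of Herglotz's theorem gives a constant \(a\ge0\) and a positive Borel measure \(\mu\) on \(\R^{m-1}\) with \(\int(1+|t|^2)^{-m/2}\,d\mu(t)<\infty\), such that
\[
        u(x)=a x_m+\int_{\R^{m-1}}P(x,t)\,d\mu(t),
        \qquad
        P(x,t)=c_m\frac{x_m}{(|x'-t|^2+x_m^2)^{m/2}} .
\]
It then suffices to show \(\mu=0\).

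For this, I will use the bound \(u(x)\le Cx_m\) as \(x_m\downarrow0\). For fixed \(x'\) and a Borel set \(E\subset\R^{m-1}\), the Poisson integral of \(\mu\) tends weakly to \(\mu\) as \(x_m\to0\). Concretely, for any nonnegative \(\varphi\in C_c(\R^{m-1})\),
\[
        \int\varphi(x')\,\bigl(u(x',x_m)-ax_m\bigr)\,dx'\longrightarrow\int\varphi\,d\mu .
\]
The left side is at most \(\int\varphi(x')\,Cx_m\,dx'\to0\). Hence \(\int\varphi\,d\mu=0\) for every such \(\varphi\), so \(\mu=0\) and \(u=ax_m\).

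There is an alternative route that avoids the representation theorem. Extend \(u\) by odd reflection \(\tilde u(x',-x_m)=-u(x',x_m)\). Because \(0<u\le Cx_m\), \(u\) extends continuously by \(0\) to \(\partial H\), and the Schwarz reflection principle makes \(\tilde u\) harmonic in \(\R^m\) with \(|\tilde u(x)|\le C|x|\). Liouville's theorem for harmonic functions of linear growth then shows that \(\tilde u\) is a linear function. Since it vanishes on \(\{x_m=0\}\), it must equal \(ax_m\). I would probably present this reflection argument as the main proof, since it is elementary.

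The only point needing care is continuity up to the boundary, which is immediate from the squeeze \(0<u\le Cx_m\). Finally, if \(u(e_m)>0\) then \(a=u(e_m)>0\); in fact this is automatic from \(u>0\).
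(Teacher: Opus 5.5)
Your reflection argument is exactly the paper's proof. Both use the squeeze $0<u\le Cx_m$ to get continuous zero boundary values, then odd Schwarz reflection and Liouville's theorem for harmonic functions of linear growth, and finally vanishing on $\{x_m=0\}$ to force $\tilde u=ax_m$. The Herglotz-representation route you sketch first is also correct, since $0\le P[\mu]\le Cx_m$ forces $\mu=0$ in the weak limit, and you are right that $a=u(e_m)>0$ is automatic from $u>0$.
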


\begin{proof}
The estimate \(u(x)\le Cx_m\) implies that \(u\) extends continuously to
\(\partial H\) by the value \(0\).  By the Schwarz reflection principle for
harmonic functions, the odd extension
\[
        U(x',x_m)=
        \begin{cases}
        u(x',x_m), & x_m>0,\\
        -u(x',-x_m), & x_m<0,\\
        0, & x_m=0,
        \end{cases}
\]
is harmonic in all of \(\R^m\).  Moreover,
\[
        |U(x)|\le C|x_m|\le C|x|.
\]
Thus \(U\) is an entire harmonic function of at most linear growth.  By
the standard Liouville theorem for harmonic functions of polynomial growth,
\(U\) is affine.  Since \(U=0\) on \(\{x_m=0\}\) and \(U\) is odd in
\(x_m\), it follows that \(U(x)=a x_m\).  Hence \(u(x)=a x_m\) in \(H\).
If \(u(e_m)>0\), then \(a=u(e_m)>0\).
\end{proof}

\begin{lemma}[Quasiconvexity of \(C^1\) domains]
\label{lem:c1-quasiconvex}
Let \(\Omega\subset\R^m\) be a bounded connected domain with \(C^1\)
boundary. Then there is a constant \(M_\Omega\ge1\) such that any two
points \(p,q\in\Omega\) can be joined by a rectifiable curve
\(\gamma\subset\Omega\) satisfying
\[
        \ell(\gamma)\le M_\Omega |p-q|.
\]
\end{lemma}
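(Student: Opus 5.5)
The plan is a compactness-plus-local-chart argument. Since \(\partial\Omega\) is compact and \(C^1\), I will cover it by finitely many boundary charts. In each chart, after a rigid motion, \(\Omega\) coincides inside a cylinder \(Q=B'(0,r)\times(-h,h)\) with the supergraph
\[
\{(x',x_m): x_m>\varphi(x')\},
\]
where \(\varphi\) is \(C^1\) and satisfies \(|\nabla\varphi|\le L\) on \(B'(0,2r)\), and \(h>2Lr\). Let \(\delta>0\) be a Lebesgue number for the family of half-size cylinders. Let \(\Omega_\delta=\{y\in\Omega:\delta_\Omega(y)\ge\delta'\}\) for a small \(\delta'\) chosen so that \(\Omega_\delta\) is covered by the interiors of finitely many balls \(B(y_i,\delta_\Omega(y_i)/2)\). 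I then split into cases according to whether \(|p-q|\) is small or large.

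\emph{Small distances.} Suppose \(|p-q|<\delta\). Then \(p\) and \(q\) lie in one half-size cylinder, or both lie well inside \(\Omega\).

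If \(p,q\) lie in a chart cylinder, write \(p=(p',p_m)\) and \(q=(q',q_m)\), and set \(t=|p-q|\). I use the three-segment path
\[
p\to p+ (1+L)t\,e_m\to q+(1+L)t\,e_m\to q.
\]
The vertical segments stay in \(\Omega\) because the region above a graph point is in \(\Omega\) within the cylinder. The middle segment joins points whose heights exceed \(\varphi\) at their base points by at least \((1+L)t\). Along the horizontal displacement, of length \(\le t\), the graph rises by at most \(Lt\), and the vertical component of the displacement is at most \(t\). So the segment stays strictly above the graph, provided it stays in the cylinder, which holds if \(\delta\ll h/(1+L)\). The total length is at most \(2(1+L)t+t\), so here \(M=3+2L\).

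If \(p,q\) are both deep inside, with \(\delta_\Omega(p)>|p-q|\), then the straight segment \([p,q]\) already lies in \(\Omega\).

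\emph{Large distances.} Suppose \(|p-q|\ge\delta\). It suffices to bound the intrinsic distance
\[
\lambda(p,q)=\inf\ell(\gamma)
\]
uniformly by a constant \(D_0\), since then \(\ell(\gamma)\le (D_0/\delta)|p-q|\). To get this bound, push each point to the compact "core": from a point near the boundary, move vertically up in its chart by distance \(\le h\) into \(\Omega_{\delta}\)-type points. More precisely, points at height \(\varphi+h/2\) form a compact subset of \(\Omega\), so the union \(K\) of these compact pieces and \(\Omega_{\delta'}\) is a compact subset of \(\Omega\).

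Since \(\Omega\) is connected and open, the intrinsic path distance \(\lambda\) is finite and continuous on \(\Omega\times\Omega\). Indeed, it is locally equal to the Euclidean distance on small balls, and the chain argument over a connected open set gives finiteness. Hence \(\lambda\) is bounded on \(K\times K\). Adding the two vertical push segments of length \(\le h\) gives the uniform bound \(D_0\).

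The main obstacle is bookkeeping in the small-distance case: ensuring that two nearby points near the boundary really lie in a common chart cylinder, at a depth where the lifted path does not exit through the top or sides of the cylinder. I handle this by using half-size cylinders, a Lebesgue number, and choosing \(\delta\) small relative to \(r\) and \(h/(1+L)\). Taking \(M_\Omega=\max\{3+2L,\,D_0/\delta,1\}\) then finishes the proof.
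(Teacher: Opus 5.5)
Your proof is correct, but it takes a different route from the paper. The paper gives no argument of its own: it observes that bounded $C^1$ domains are Lipschitz domains and cites Martio--Sarvas for the quasiconvexity of bounded connected Lipschitz domains. You prove the statement directly. The ``lift, translate, drop'' path inside a chart gives local quasiconvexity with the explicit constant $3+2L$. For $|p-q|\ge\delta$, you bound the inner distance by pushing both points vertically onto a compact core $K\subset\Omega$ and using continuity, hence boundedness, of the inner metric $\lambda$ on $K\times K$. Since you only ever use $|\nabla\varphi|\le L$, your argument is in fact a self-contained proof of the Lipschitz-domain statement the paper quotes. The citation buys brevity; your argument buys transparency and an explicit small-scale constant, although the large-scale constant $D_0/\delta$ remains only qualitative because it comes from compactness.

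When writing it up, make two pieces of bookkeeping precise. First, take the Lebesgue number for the cover of the compact collar $\{y\in\overline\Omega:\operatorname{dist}(y,\partial\Omega)\le 2\delta'\}$ by the open half-size cylinders, and choose $\delta\le\delta'$. Then $|p-q|<\delta$ forces either a common half-size cylinder or $\delta_\Omega(p)\ge\delta'>|p-q|$; the finite cover of $\Omega_{\delta'}$ by balls is not needed. Second, since $\lambda$ is only an infimum, take $D_0=\sup_{K\times K}\lambda+1$ plus the lengths of the two vertical pushes, so that an actual curve of length at most $D_0$ exists.
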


\begin{proof}
Bounded \(C^1\) domains are Lipschitz domains, and bounded connected
Lipschitz domains are quasiconvex. See, for example, Martio and Sarvas
\cite{MartioSarvas1979}.
\end{proof}

\begin{theorem}[Bi-Lipschitz harmonic quasiconformal local diffeomorphisms]
\label{th:harmonic-qc-local-diffeo-bilip}
Let \(\Omega\subset\R^m\) be a bounded domain with \(C^1\)-Dini boundary,
and let
\[
        f:B^m\to\Omega
\]
be a harmonic \(K\)-quasiconformal homeomorphism. Assume that \(f\) is a
local \(C^1\)-diffeomorphism in \(B^m\). Then \(f\) is bi-Lipschitz.
\end{theorem}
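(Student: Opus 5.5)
By Lemma~\ref{lem:lipschitz-dini}, \(f\) is already Lipschitz, i.e.\ \(|Df|\le C\) in \(B^m\). By quasiconformality, \(l(Df(x))\ge K_1^{-1}|Df(x)|\). It therefore suffices to prove a uniform lower bound
\[
        \inf_{x\in B^m} l(Df(x))>0 .
\]
Once this is known, \(f^{-1}\) is locally Lipschitz with a uniform constant. Lemma~\ref{lem:c1-quasiconvex} then makes \(f^{-1}\) globally Lipschitz: join \(p,q\in\Omega\) by a curve of length at most \(M_\Omega|p-q|\) and integrate \(|D f^{-1}|\le C\) along it.

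On any compact subset of \(B^m\) the lower bound holds, since \(f\) is a local \(C^1\)-diffeomorphism and \(l(Df)\) is continuous and positive there. The issue is therefore only near \(\partial B^m\). I argue by contradiction. Suppose there are points \(x_j\) with \(|x_j|\to1\) and \(l(Df(x_j))\to0\), hence \(|Df(x_j)|\to0\).

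\textbf{Blow-up.} Set \(r_j=1-|x_j|\), choose \(\xi_j\in\partial B^m\) nearest to \(x_j\), and rotate so that \(\xi_j\to\xi\). Define the rescalings
\[
        f_j(y)=\frac{f(\xi_j+r_jy)-f(\xi_j)}{r_j},
\]
where \(f\) is extended continuously to \(\partial B^m\); this extension exists because \(f\) is Lipschitz. The \(f_j\) are harmonic, \(K\)-quasiconformal and uniformly Lipschitz. They are defined on rescaled balls that converge to a half-space \(H\), and after a rotation the point \(x_j\) corresponds to \(e_m\). Passing to a subsequence, \(f_j\to g\) locally uniformly, and by harmonicity also in \(C^1_{\rm loc}\) on \(H\).

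\textbf{Height function.} Lemma~\ref{lem:boundary-distance-dini} gives
\[
        c\,y_m\le \delta_\Omega(f(\xi_j+r_jy))/r_j\le C\,y_m .
\]
Let \(\nu_j\) be the inner normal of \(\partial\Omega\) at \(f(\xi_j)\), and put \(u_j=\langle f_j,\nu_j\rangle\). Since \(\partial\Omega\) is \(C^1\) (the Dini condition is more than enough here), the rescaled domains \((\Omega-f(\xi_j))/r_j\) converge to the half-space \(\{\langle z,\nu\rangle>0\}\). Hence \(u_j\) differs from the rescaled distance by \(o(1)(1+|y|)\) locally uniformly. In the limit \(u=\langle g,\nu\rangle\) is positive and harmonic on \(H\) and satisfies \(c\,y_m\le u\le C\,y_m\). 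Lemma~\ref{lem:halfspace-liouville} then gives \(u=a\,y_m\) with \(a\ge c>0\).

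\textbf{Contradiction.} \(C^1\)-convergence gives \(\nabla u(e_m)=\lim Df_j(e_m)^T\nu_j=\lim Df(x_j)^T\nu_j=0\). This contradicts \(\nabla u=a e_m\neq0\).

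The main obstacle is the height-function step. One must justify that the limit normal component is exactly positive harmonic with linear two-sided bounds on the whole half-space, including near the flat boundary. This needs the \(C^1\) flattening of \(\partial\Omega\) under rescaling together with the uniform boundary-distance estimate of Lemma~\ref{lem:boundary-distance-dini}. I would handle it by comparing \(\langle z-f(\xi_j),\nu_j\rangle\) with \(\delta_\Omega(z)\) using the modulus of continuity of the boundary charts, which gives an error of order \(\omega(Cr_j|y|)|y|\), and this tends to \(0\) locally uniformly.
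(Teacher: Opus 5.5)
Your argument is correct and has the same overall structure as the paper's proof: the Lipschitz bound from Lemma~\ref{lem:lipschitz-dini}, a blow-up at the boundary, a height coordinate forced to be linear by Lemma~\ref{lem:halfspace-liouville}, and quasiconvexity for \(f^{-1}\). The final step, however, is genuinely different and simpler.

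The paper passes \(l(Df(x_k))\to0\) to the limit as \(l(Df_\infty(e_m))=0\). It then uses the height function to show that \(f_\infty\) is nonconstant. Next it invokes the compactness theorem for quasiconformal mappings to conclude that \(f_\infty\) is \(K\)-quasiconformal. Only then does it apply the distortion inequality to \(f_\infty\) at \(e_m\).

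You apply the distortion inequality \(|Df|\le K_1\,l(Df)\) to \(f\) itself at the points \(x_j\), before rescaling. This gives \(Df_j(e_m)\to0\), and the contradiction is simply \(\nabla u(e_m)=0\ne a\,e_m\). Your route therefore dispenses with V\"ais\"al\"a's compactness theorem. It needs no information about the limit map beyond harmonicity of its normal component.

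You normalize at the boundary value \(f(\xi_j)\), using the continuous extension of the Lipschitz map \(f\) to \(\overline{B^m}\). This extension takes \(\partial B^m\) into \(\partial\Omega\) by Lemma~\ref{lem:boundary-distance-dini}. The choice works as well as the paper's normalization at the nearest boundary point \(a_k\) of \(f(x_k)\), and it gives the local bound \(|f_j(y)|\le L|y|\) for free.

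Your explicit flattening estimate
\(|\delta_\Omega(z)-\langle z-f(\xi_j),\nu_j\rangle|\lesssim r_j|y|\,\omega(Cr_j|y|)\)
supplies a justification that the paper only asserts.

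Two cosmetic points:
\begin{enumerate}
\item The rotation \(U_j\) with \(U_je_m=-\xi_j\) should be written into the definition of \(f_j\), so that \(x_j\) actually corresponds to \(e_m\).
\item Before the limit, the two-sided bound holds with \((1-|\xi_j+r_jU_jy|)/r_j\) in place of \(y_m\). This quantity converges to \(y_m\) locally uniformly on the half-space.
\end{enumerate}
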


\begin{proof}
By Lemma~\ref{lem:lipschitz-dini}, \(f\) is Lipschitz.  Hence there is
\(L<\infty\) such that
\[
        |f(x)-f(y)|\le L|x-y|,
        \qquad x,y\in B^m.
\]

We prove that the minimal stretch has a positive lower bound. Suppose, to
the contrary, that there are points \(x_k\in B^m\) such that
\[
        l(Df(x_k))\to0.
\]
Since \(f\) is a local diffeomorphism, \(l(Df)>0\) in \(B^m\).  By
continuity, \(l(Df)\) has a positive minimum on compact subsets.  Hence
\[
        |x_k|\to1.
\]

Let
\[
        \rho_k=1-|x_k|,
        \qquad
        \xi_k=\frac{x_k}{|x_k|}.
\]
After passing to a subsequence, \(\xi_k\to\xi_0\in\partial B^m\).  Let
\(q_k=\xi_k\in\partial B^m\), and choose orthogonal maps \(U_k\) such that
\[
        U_k e_m=-\xi_k.
\]
Thus
\[
        x_k=q_k+\rho_k U_k e_m.
\]
Define the rescaled source domains
\[
        D_k=\{\zeta\in\R^m:q_k+\rho_k U_k\zeta\in B^m\}.
\]
Then \(D_k\) converges on compact subsets to the half-space
\[
        H=\{\zeta_m>0\},
\]
and the point \(e_m\in H\) corresponds to \(x_k\).

Let \(p_k=f(x_k)\), and choose \(a_k\in\partial\Omega\) with
\[
        |p_k-a_k|=\delta_\Omega(p_k).
\]
After passing to a subsequence, the boundary points \(a_k\) converge to
some \(a_0\in\partial\Omega\).  Since \(\partial\Omega\) is \(C^1\)-Dini,
there are orthogonal maps \(R_k\) sending the inward unit normal to
\(\partial\Omega\) at \(a_k\) to \(e_m\), such that the rescaled target
domains
\[
        \Omega_k=\frac{R_k(\Omega-a_k)}{\rho_k}
\]
converge on compact subsets to the half-space \(H\).  Define
\[
        f_k(\zeta)=
        \frac{R_k(f(q_k+\rho_kU_k\zeta)-a_k)}{\rho_k},
        \qquad \zeta\in D_k.
\]
At a nearest boundary point of a \(C^1\) boundary one has
\[
        p_k-a_k=\delta_\Omega(p_k)n(a_k),
\]
where \(n(a_k)\) is the inward unit normal. Hence, by the choice of
\(R_k\),
\[
        f_k(e_m)=\frac{\delta_\Omega(p_k)}{\rho_k}e_m.
\]
Lemma~\ref{lem:boundary-distance-dini} shows that these base values stay
in a fixed compact subinterval of the positive \(e_m\)-axis. Each
\(f_k\) is harmonic and \(K\)-quasiconformal. Together with the
Lipschitz bound for \(f\), the basepoint normalization implies that the
maps \(f_k\) are locally uniformly bounded and locally uniformly
Lipschitz on compact subsets of \(H\).
Harmonic interior estimates and Arzel\`a--Ascoli therefore give a
subsequence converging in \(C^1\) on compact subsets of \(H\) to a
harmonic mapping
\[
        f_\infty:H\to \overline H.
\]
Moreover,
\[
        l(Df_\infty(e_m))=
        \lim_{k\to\infty} l(Df_k(e_m))
        =
        \lim_{k\to\infty} l(Df(x_k))=0.
\]

Let
\[
        u(\zeta)=(f_\infty(\zeta))_m
\]
be the last coordinate of \(f_\infty\).  The two-sided estimate of
Lemma~\ref{lem:boundary-distance-dini}, after rescaling, reads
\[
        c\,\delta_{D_k}(\zeta)
        \le \delta_{\Omega_k}(f_k(\zeta))
        \le C\,\delta_{D_k}(\zeta).
\]
On compact subsets of the limiting half-space,
\(\delta_{D_k}(\zeta)\to\zeta_m\), while the \(C^1\) convergence of the
rescaled target boundaries gives
\(\delta_{\Omega_k}(w)\to w_m\) locally uniformly. Passing to the limit
therefore gives
\[
        c\zeta_m\le u(\zeta)\le C\zeta_m,
        \qquad \zeta\in H.
\]
In particular, \(u\) is a positive harmonic function in \(H\).  By
Lemma~\ref{lem:halfspace-liouville},
\[
        u(\zeta)=a\zeta_m
\]
with \(a>0\).  Hence
\[
        |Df_\infty(e_m)|\ge |\nabla u(e_m)|=a>0.
\]
Thus \(f_\infty\) is nonconstant. By the compactness theorem for
quasiconformal mappings, a nonconstant locally uniform limit of
\(K\)-quasiconformal homeomorphisms is again \(K\)-quasiconformal, indeed a
homeomorphism onto its image; see V\"ais\"al\"a \cite{Vaisala1971}. Since
\(f_\infty\) is \(C^1\), the pointwise distortion inequality gives
\[
        |Df_\infty(e_m)|\le K_1 l(Df_\infty(e_m)),
\]
with \(K_1\) depending only on \(K\) and \(m\).  Therefore
\[
        l(Df_\infty(e_m))\ge a/K_1>0,
\]
contradicting \(l(Df_\infty(e_m))=0\).  Thus
\[
        \inf_{x\in B^m}l(Df(x))>0.
\]

Choose \(c_1>0\) such that
\[
        l(Df(x))\ge c_1,
        \qquad x\in B^m.
\]
Then \(f^{-1}\) is locally \(C^1\) in \(\Omega\), and
\[
        |D(f^{-1})(f(x))|\le c_1^{-1}.
\]
Let \(x,y\in B^m\), and put \(p=f(x)\), \(q=f(y)\).  By
Lemma~\ref{lem:c1-quasiconvex}, there is a rectifiable curve
\(\gamma\subset\Omega\) joining \(p\) to \(q\) such that
\[
        \ell(\gamma)\le M_\Omega |p-q|.
\]
Therefore
\[
\begin{aligned}
        |x-y|
        &=|f^{-1}(p)-f^{-1}(q)|                         \\
        &\le \int_\gamma |D(f^{-1})(\eta)|\,ds(\eta)       \\
        &\le c_1^{-1}\ell(\gamma)                         \\
        &\le c_1^{-1}M_\Omega |p-q|.
\end{aligned}
\]
Thus
\[
        |f(x)-f(y)|\ge \frac{c_1}{M_\Omega}|x-y|.
\]
Together with the Lipschitz estimate, this proves that \(f\) is
bi-Lipschitz.
\end{proof}

\begin{corollary}[Small-distortion harmonic quasiconformal mappings]
\label{cor:small-distortion-dini-bilip}
Let \(\Omega\subset\R^m\) be a bounded domain with \(C^1\)-Dini
boundary, and let
\[
        f:B^m\to\Omega
\]
be a harmonic quasiconformal homeomorphism. Assume that its outer
dilatation satisfies
\[
        K_O(f)<3^{m-1}.
\]
Then \(f\) is bi-Lipschitz.
\end{corollary}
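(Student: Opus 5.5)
The corollary will follow from Theorem~\ref{th:harmonic-qc-local-diffeo-bilip} once we know that \(f\) is a local \(C^1\)-diffeomorphism in \(B^m\). Since \(f\) is harmonic, it is real analytic, hence \(C^1\). By the inverse function theorem, it therefore suffices to show that the Jacobian \(J_f\) has no zeros in \(B^m\). The plan is to deduce this from the small outer dilatation hypothesis, using a local-index argument on quasiregular mappings in place of any Lewy-type theorem.

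The key input is the classical local injectivity theorem for quasiregular mappings in dimension \(m\ge3\) (Martio--Rickman--Väisälä, see also Rickman's monograph). It relies on the estimate for the local index
\[
i(x,f)\le K_O(f)\,\ldots,
\]
and more precisely on the fact that a quasiregular map with \(K_O(f)<2^{m-1}\) is locally injective. That bound is weaker than the \(3^{m-1}\) threshold in the hypothesis. The refined statement I would invoke instead is the following: for a \(C^1\) (indeed smooth) quasiregular mapping with \(K_O<3^{m-1}\), the branch set is empty. This is a Bonk--Heinonen type result: smooth quasiregular mappings have branch set of codimension at least two, and near a branch point of minimal local index \(i(x,f)\ge2\), the \(C^1\)-smoothness forces a tangent map. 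One then shows that \(K_O\ge 3^{m-1}\) at such points. A cleaner route for harmonic maps is a blow-up argument. Suppose \(J_f(x_0)=0\). Expand \(f(x_0+h)-f(x_0)=P_d(h)+O(|h|^{d+1})\), where \(P_d\) is the first nonvanishing homogeneous harmonic polynomial map. Since \(f\) is \(K\)-quasiconformal and \(Df(x_0)\) is degenerate, the distortion inequality \(|Df|^m\le K_O J_f\) forces \(Df(x_0)=0\), so \(d\ge2\). The rescalings \(h\mapsto P_d(h)\) are then a nonconstant homogeneous quasiregular map of degree \(d\), obtained as a locally uniform limit of rescaled quasiconformal maps.

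The homogeneous map \(P_d\) is also injective, as a limit of homeomorphisms that is nonconstant and quasiregular (Väisälä's compactness theorem, as used in the proof of Theorem~\ref{th:harmonic-qc-local-diffeo-bilip}). It is positively homogeneous of degree \(d\ge2\), so it maps rays to rays with radial stretching \(r\mapsto r^d\). The main step, and the expected obstacle, is to show that such a homogeneous quasiconformal map has \(K_O(P_d)\ge d^{m-1}\ge\ldots\). I would try to compute the distortion in spherical coordinates. The radial derivative at radius \(r\) is \(d\,r^{d-1}\,|P_d(\theta)|\), while the tangential derivatives come from the induced map of \(S^{m-1}\) scaled by \(r^{d-1}\). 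Comparing the modulus of the ring \(\{1<|x|<2\}\) with its image, and integrating the distortion inequality over spherical shells, should give \(K_O\ge d^{m-1}\). This mimics the standard example \(x\mapsto |x|^{d-1}x\) with \(K_O=d^{m-1}\).

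Harmonicity should then exclude \(d=2\). Indeed, \(P_2\) would be a quadratic harmonic homeomorphism, and for quadratic forms injectivity fails because \(P_2(-h)=P_2(h)\). So \(d\ge3\), which gives \(K_O(f)\ge K_O(P_d)\ge 3^{m-1}\), contradicting the hypothesis. Hence \(J_f\neq0\) everywhere, and Theorem~\ref{th:harmonic-qc-local-diffeo-bilip} applies. The homogeneous distortion lower bound is the delicate point; the parity argument for \(d=2\), by contrast, is immediate.
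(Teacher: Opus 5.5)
Your reduction is the paper's: once $J_f\neq0$ in $B^m$, real analyticity and the inverse function theorem make $f$ a local $C^1$-diffeomorphism, and Theorem~\ref{th:harmonic-qc-local-diffeo-bilip} applies. The difference is that the paper simply cites Bo\v zin--Mateljevi\'c for $J_f\neq0$, whereas you reprove that input. Your blow-up argument is essentially theirs (as summarized in Remark~\ref{rem:historical-hqc}), and it is correct.

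The pieces fit together as follows:
\begin{itemize}
\item The pointwise inequality $|Df|^m\le K_O(f)\,|J_f|$ holds everywhere by continuity. So $J_f(x_0)=0$ forces $Df(x_0)=0$, and hence $d\ge2$.
\item The maps $g_t(h)=t^{-d}\bigl(f(x_0+th)-f(x_0)\bigr)$ converge to $P_d$ in $C^1_{\rm loc}(\R^m)$. The distortion inequality is invariant under this rescaling, so in the limit $|DP_d|^m\le K_O(f)\,|J_{P_d}|$.
\item V\"ais\"al\"a's theorem makes $P_d$ injective, and parity then excludes every even $d$. This step uses no harmonicity; harmonicity is needed only to guarantee that some homogeneous term is nonzero.
\end{itemize}

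For the step you flag as delicate, the fixed ring $\{1<|x|<2\}$ is not enough. Its image is bounded by the non-round surfaces $P_d(S^{m-1})$ and $2^dP_d(S^{m-1})$, so you cannot read off its modulus. Instead use $\{1<|x|<R\}$ and let $R\to\infty$. Since $P_d(S^{m-1})\subset\{a\le|y|\le b\}$ with $a>0$, every image curve crosses $\{b<|y|<aR^d\}$. The $K_O$-inequality $M(\Gamma)\le K_O(f)\,M(P_d\Gamma)$ (obtained by pulling back admissible densities) then gives, after cancelling the area of $S^{m-1}$,
\[
  (\log R)^{1-m}\le K_O(f)\,\bigl(d\log R+\log(a/b)\bigr)^{1-m}.
\]
Letting $R\to\infty$ yields $K_O(f)\ge d^{m-1}\ge3^{m-1}$, the desired contradiction. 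Your route is self-contained and shows exactly where the threshold $3^{m-1}$ comes from; the paper's citation is shorter.

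You should discard your second paragraph, however. Since $f$ is a homeomorphism, its branch set is empty automatically, and this says nothing about zeros of $J_f$: for example, $x\mapsto|x|^2x$ is a quasiconformal homeomorphism with $J(0)=0$. Also, local injectivity of quasiregular maps under $K_O<2^{m-1}$ is not an available theorem. For $m\ge3$ the known local injectivity results give only some constant $K(m)>1$, and the sharp constant suggested by the winding map is a well-known open problem.
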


\begin{proof}
By the theorem of Bo\v zin and Mateljevi\'c
\cite{BozinMateljevic2015}, the small-distortion condition
\(K_O(f)<3^{m-1}\) excludes interior zeros of the Jacobian of a harmonic
quasiconformal mapping. Hence
\[
        J_f(x)\neq 0,
        \qquad x\in B^m .
\]
Since the coordinate functions of \(f\) are harmonic, \(f\) is smooth in
\(B^m\). Therefore the inverse function theorem implies that \(f\) is a
local \(C^1\)-diffeomorphism in \(B^m\). The conclusion now follows
from Theorem~\ref{th:harmonic-qc-local-diffeo-bilip}.
\end{proof}

\begin{corollary}[Quasiconformal pluriharmonic mappings in \(\C^2\)]
\label{cor:pluriharmonic-c2-bilip}
Let \(\Omega\subset\C^2\cong\R^4\) be a bounded domain with \(C^1\)-Dini
boundary, and let
\[
        F:\mathbb B^2\to\Omega
\]
be a \(K\)-quasiconformal pluriharmonic homeomorphism. Then \(F\) is
bi-Lipschitz. In particular, every quasiconformal pluriharmonic
homeomorphism of \(\mathbb B^2\) onto itself is bi-Lipschitz.
\end{corollary}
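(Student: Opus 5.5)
The plan is to reduce Corollary~\ref{cor:pluriharmonic-c2-bilip} to Theorem~\ref{th:harmonic-qc-local-diffeo-bilip} with \(m=4\), identifying \(\mathbb B^2\subset\C^2\) with the Euclidean unit ball \(B^4\subset\R^4\). That theorem needs three hypotheses on \(F:\mathbb B^2\to\Omega\): \(F\) is harmonic, \(K\)-quasiconformal, and a local \(C^1\)-diffeomorphism. Quasiconformality is assumed. Harmonicity follows from pluriharmonicity: each real coordinate \(u_j\) of \(F\) satisfies \(\partial\bar\partial u_j=0\), hence \(\partial^2u_j/\partial z_k\partial\bar z_k=0\) for \(k=1,2\). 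Summing gives \(\tfrac14\Delta u_j=0\), so every coordinate is harmonic in \(\R^4\) and therefore real analytic.

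The local-diffeomorphism hypothesis is the only nontrivial input, and it is exactly where Theorem~\ref{th:main} enters. Since \(F\) is a homeomorphism, it is one-to-one on all of \(\mathbb B^2\), in particular near each point. Corollary~\ref{cor:univalent} then gives \(J_F(z)\neq0\) for every \(z\in\mathbb B^2\). Since \(F\) is smooth, the inverse function theorem makes \(F\) a local \(C^1\)- (indeed \(C^\infty\)-) diffeomorphism. Theorem~\ref{th:harmonic-qc-local-diffeo-bilip} now applies and shows that \(F\) is bi-Lipschitz.

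For the final sentence, take \(\Omega=\mathbb B^2\). Its boundary is the round sphere \(S^3\), which is real analytic and so locally a graph with Lipschitz, hence Dini, gradient. Thus \(\mathbb B^2\) is a bounded \(C^1\)-Dini domain, and the first part applies.

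There is no real obstacle, because the analytic work lies in Theorem~\ref{th:main} and Theorem~\ref{th:harmonic-qc-local-diffeo-bilip}. The point needing care is that the \(C^2\) hypothesis of Theorem~\ref{th:main} holds, which the smoothness of harmonic maps guarantees. One should also note that Theorem~\ref{th:harmonic-qc-local-diffeo-bilip} is stated for the unit ball of \(\R^m\), which is literally \(\mathbb B^2\) under \(\C^2\cong\R^4\).
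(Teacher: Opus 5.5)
Your proposal is correct and is essentially the paper's proof: Theorem~\ref{th:main} (equivalently Corollary~\ref{cor:univalent}) gives $J_F\neq0$ on $\mathbb B^2$, the inverse function theorem makes $F$ a local $C^1$-diffeomorphism, the real coordinates of $F$ are harmonic because $F$ is pluriharmonic, and Theorem~\ref{th:harmonic-qc-local-diffeo-bilip} with $m=4$ then gives that $F$ is bi-Lipschitz. Your extra checks are details the paper leaves implicit: the $C^2$ hypothesis, which is already built into the paper's definition of pluriharmonicity, and the fact that the round sphere $\partial\mathbb B^2$ is $C^1$-Dini, needed for the final sentence.
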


\begin{proof}
By Theorem~\ref{th:main}, the real Jacobian of \(F\) does not vanish in
\(\mathbb B^2\), because \(F\) is locally one-to-one.  Hence \(F\) is a
local \(C^1\)-diffeomorphism.  Since the real coordinate functions of a
pluriharmonic mapping are harmonic, Theorem~\ref{th:harmonic-qc-local-diffeo-bilip}
with \(m=4\) applies.
\end{proof}

\subsection{The planar case}

In the plane, the local-diffeomorphism hypothesis follows from Lewy's
classical theorem.  We record the result in a form where both the source and
the image are allowed to be finitely connected \(C^1\)-Dini domains.

\begin{corollary}[Planar Dini-smooth source and image]
\label{cor:planar-dini-bilip}
Let \(D,\Omega\subset\C\) be bounded finitely connected domains whose
boundary components are \(C^1\)-Dini Jordan curves. Let
\[
        f:D\to\Omega
\]
be a harmonic \(K\)-quasiconformal homeomorphism. Then \(f\) is
bi-Lipschitz.
\end{corollary}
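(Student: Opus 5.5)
The plan is to reduce to the situation of Theorem~\ref{th:harmonic-qc-local-diffeo-bilip}, whose proof uses the ball as source only through a few local ingredients. First, Lewy's classical theorem gives \(J_f\neq0\) everywhere in \(D\), since \(f\) is a univalent harmonic map between plane domains. Hence \(f\) is a local \(C^1\)-diffeomorphism, and quasiconformality yields \(|Df|\le K\,l(Df)\) pointwise.

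\textbf{Two-sided boundary-distance estimate.} The key step is the analogue of Lemma~\ref{lem:boundary-distance-dini}:
\[
c\,\delta_D(x)\le\delta_\Omega(f(x))\le C\,\delta_D(x),\qquad x\in D.
\]
As in that lemma, \(f\) is a homeomorphism of bounded domains, so \(\delta_\Omega(f(x))\to0\) as \(\delta_D(x)\to0\).

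\begin{itemize}
\item Take \(\rho_\Omega\) from Lemma~\ref{lem:regularized-defining-function-dini} and set \(v=\rho_\Omega\circ f\) on a collar of \(\partial D\).
\item The same computation gives \(|\Delta v|\le A\,\frac{\omega(v)}{v}\,|\nabla v|^2\).
\item Hence \(w_\pm=\Phi_\pm(v)\) are sub- and superharmonic on the collar.
\end{itemize}

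The only new point is replacing the radial function \(h\). For this I would use, on the collar \(A=\{x\in D:\delta_D(x)<r_0\}\), the harmonic function \(h\) equal to \(0\) on \(\partial D\) and \(1\) on the inner boundary. Since \(\partial D\) consists of finitely many \(C^1\)-Dini Jordan curves, the Kellogg--Warschawski type boundary Hopf lemma and gradient estimate for Dini domains give \(h\asymp\delta_D\) near \(\partial D\). Equivalently, one can use Lieberman's regularized distance \(\rho_D\) together with the same \(\Phi_\pm\) trick to build barriers comparable to \(\delta_D\). The comparison principle then gives \(v\asymp\delta_D\), exactly as before.

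\textbf{Lipschitz bound.} The argument of Lemma~\ref{lem:lipschitz-dini} transfers verbatim. Take \(r=\delta_D(x)/2\) and use quasi-invariance of the quasihyperbolic metric (Gehring--Osgood). Then \(|f(y)-f(x)|\le C\delta_\Omega(f(x))\le C\delta_D(x)\) on \(B(x,r)\), and the interior harmonic gradient estimate gives \(|Df|\le C\).

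\textbf{Lower bound on the minimal stretch.} Run the blow-up of Theorem~\ref{th:harmonic-qc-local-diffeo-bilip}. If \(l(Df(x_k))\to0\), then by local nondegeneracy \(\delta_D(x_k)\to0\). Choose nearest points \(q_k\in\partial D\) and \(a_k\in\partial\Omega\), and rescale both domains by \(\rho_k=\delta_D(x_k)\) with rotations taking the inward normals to \(e_2\). Because both boundaries are \(C^1\), the rescaled source and target domains converge locally to the half-plane \(H\). The rescaled two-sided estimate passes to the limit, so the second coordinate of \(f_\infty\) satisfies \(c\zeta_2\le u\le C\zeta_2\). Lemma~\ref{lem:halfspace-liouville} then gives \(u=a\zeta_2\) with \(a>0\). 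The quasiconformal limit theorem yields \(l(Df_\infty(e_2))\ge a/K_1>0\), which is a contradiction. Hence \(l(Df)\ge c_1>0\).

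\textbf{Conclusion.} The lower Lipschitz bound follows as before, using Lemma~\ref{lem:c1-quasiconvex} for the \(C^1\) domain \(\Omega\) and integrating \(|D(f^{-1})|\le c_1^{-1}\) along a quasiconvex curve. Together with the Lipschitz bound, this shows \(f\) is bi-Lipschitz.

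The main obstacle I expect is the boundary-distance estimate on a multiply connected source. Specifically, one must construct a harmonic barrier comparable to \(\delta_D\) near each Dini boundary curve. This is where the Dini hypothesis on \(D\), not only on \(\Omega\), is essential, since for merely \(C^1\) curves \(h\asymp\delta_D\) can fail.
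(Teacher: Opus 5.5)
Your argument is correct, but it takes a different route from the paper. The paper first uniformizes. By Koebe's theorem it writes $D=\varphi(G)$ with $G$ a circle domain, and Kellogg--Warschawski makes $\varphi$ bi-Lipschitz. It then works with $F=f\circ\varphi$, which stays harmonic and $K$-quasiconformal because harmonicity is invariant under conformal changes of variable in the plane. On $G$ the barriers are the explicit radial annular functions of Lemma~\ref{lem:boundary-distance-dini}.

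You instead stay on $D$ and let the Dini regularity of $\partial D$ enter only through the barrier. Your second option does this most cleanly. Lieberman's $\rho_D$ satisfies $|\Delta\rho_D|\le A'\frac{\omega(\rho_D)}{\rho_D}|\nabla\rho_D|^2$. So with $\Psi_\pm(t)=\int_0^t e^{\pm A'I(s)}\,ds$, the function $\Psi_-(\rho_D)$ is superharmonic and $\Psi_+(\rho_D)$ is subharmonic, and both are comparable to $\delta_D$. The comparison then reads $w_-\ge\lambda\,\Psi_+(\rho_D)$, since $w_- - \lambda\Psi_+(\rho_D)$ is superharmonic, and $w_+\le\Lambda\,\Psi_-(\rho_D)$. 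You should state explicitly which barrier gives which inequality.

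The paper's route gives explicit barriers and a literal reuse of the disc argument. Its cost is Koebe uniformization, boundary regularity of the Koebe map, and conformal invariance of harmonicity, all of which are planar. Your route uses nothing planar except Lewy's theorem. It in fact proves Theorem~\ref{th:harmonic-qc-local-diffeo-bilip} with $B^m$ replaced by any bounded $C^1$-Dini source domain in $\R^m$.

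One small point to add: $D$ is not convex, so passing from $|Df|\le C$ to a global Lipschitz bound needs quasiconvexity of $D$. That is, Lemma~\ref{lem:c1-quasiconvex} must be applied to $D$ as well as to $\Omega$; the paper is equally tacit about this for $G$.
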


\begin{proof}
By Koebe's circle-domain theorem there is a circular domain \(G\) and a
conformal map
\[
        \varphi:G\to D.
\]
Since \(\partial D\) is \(C^1\)-Dini, the Kellogg--Warschawski theorem
implies that \(\varphi\) extends to a \(C^1\)-diffeomorphism of the
closures and satisfies
\[
        0<c\le |\varphi'|\le C<\infty
        \qquad\text{on }\overline G .
\]
Thus \(\varphi\) is bi-Lipschitz.  It is enough to prove that
\[
        F=f\circ\varphi:G\to\Omega
\]
is bi-Lipschitz.  Since harmonicity and quasiconformality are preserved
under conformal changes of variables in the plane, \(F\) is harmonic and
\(K\)-quasiconformal.

By Lewy's classical theorem, \(F\) has nonvanishing Jacobian in \(G\), and
therefore \(F\) is a local \(C^1\)-diffeomorphism.

We verify the ingredients of
Theorem~\ref{th:harmonic-qc-local-diffeo-bilip} for the circular source
\(G\). Since \(G\) has only finitely many boundary components and each of
them is a circle, every boundary component admits a collar in which a
radial harmonic barrier is comparable to \(\delta_G\). The regularized
defining function of \(\Omega\), composed with \(F\), satisfies
\[
        |\Delta v|\le A\frac{\omega(v)}{v}|\nabla v|^2,
\]
and the same Dini changes of variables as above produce a superharmonic
and a subharmonic comparison function. Comparing them with the annular
barriers on each boundary collar gives
\[
        c\delta_G(z)\le \delta_\Omega(F(z))\le C\delta_G(z),
        \qquad z\in G.
\]
On the compact part of \(G\) away from the boundary the same estimate
holds after changing the constants. The quasihyperbolic argument and the
interior gradient estimate then give \(|DF|\le C\) throughout \(G\).

It remains to exclude degeneration of the minimal stretch near the
boundary. Suppose that \(z_k\to\partial G\) and
\(l(DF(z_k))\to0\), and put \(\rho_k=\delta_G(z_k)\). Because the
boundary of \(G\) is circular, after translating, rotating, and scaling
by \(\rho_k^{-1}\), the source domains converge on compact subsets to a
half-plane. The \(C^1\)-Dini regularity of \(\partial\Omega\) gives the
same local convergence of the rescaled target domains to a half-plane.
Normalize at nearest boundary points exactly as in the proof of
Theorem~\ref{th:harmonic-qc-local-diffeo-bilip}. The rescaled maps are
locally uniformly bounded and locally uniformly Lipschitz, so harmonic
interior estimates give a subsequential \(C^1_{\rm loc}\)-limit. At this
stage the limit is only known to be harmonic. The rescaled two-sided
boundary-distance estimate passes to the limit and shows that its height
coordinate is bounded above and below by positive constant multiples of
the height variable. By Lemma~\ref{lem:halfspace-liouville}, that height
coordinate is a nonzero linear function. Hence the limiting map is
nonconstant, and the compactness theorem for quasiconformal mappings now
implies that it is \(K\)-quasiconformal. The pointwise distortion
inequality then gives a positive lower bound for its minimal stretch at
the base point, contradicting \(l(DF(z_k))\to0\). Thus
\(\inf_G l(DF)>0\).

Thus \(F\) is bi-Lipschitz. Since \(\varphi\) is bi-Lipschitz, the original
map \(f=F\circ\varphi^{-1}\) is bi-Lipschitz as well.
\end{proof}

\begin{remark}\label{rem:dini-and-local-diffeomorphism}
The proof separates the roles of boundary regularity and local
nondegeneracy.  The \(C^1\)-Dini smoothness of the image domain gives the
regularized defining function and the two-sided boundary-distance estimate.
The assumption that the harmonic quasiconformal map is a local
diffeomorphism gives pointwise nondegeneracy in the interior.  The blow-up
argument and the half-space Liouville lemma then prevent this
nondegeneracy from degenerating at the boundary.

For pluriharmonic mappings in \(\C^2\), the local-diffeomorphism hypothesis
is a consequence of Theorem~\ref{th:main}.  In the plane, it is a consequence
of Lewy's classical theorem.  In higher real dimensions it is not a
consequence of ordinary harmonicity alone, as shown by Wood's examples.
\end{remark}

\begin{remark}[Historical context for harmonic quasiconformal mappings]
\label{rem:historical-hqc}
The metric regularity of harmonic quasiconformal mappings has a substantial
history. Martio's work on harmonic quasiconformal mappings of the unit disk
is one of the early systematic sources of the subject; in particular it
connected quasiconformality of the Poisson extension with quantitative
boundary regularity conditions \cite{Martio1969}. A decisive planar result
was later obtained by Pavlovi\'c, who proved that a harmonic homeomorphism of
the unit disk onto itself is quasiconformal if and only if it is
bi-Lipschitz in the Euclidean metric \cite{Pavlovic2002}.

Subsequent work considered more general target domains and boundary
regularity assumptions. Kalaj proved Lipschitz and related boundary-distance
estimates for harmonic quasiconformal mappings between Jordan domains and,
in particular, for Dini-smooth targets \cite{Kalaj2008,Kalaj2015Dini};
related boundary regularity phenomena, including Muckenhoupt-weight
properties and a Lindel\"of-type theorem for harmonic quasiconformal
mappings, were studied in \cite{Kalaj2015AIM}. The sharpness of
Dini-type assumptions is also reflected by examples in the merely
\(C^1\)-smooth case \cite{Kalaj2022RMI}.

 Astala and Manojlovi\'c
studied higher-dimensional counterparts of Pavlovi\'c's theorem and proved
Lipschitz estimates for harmonic quasiconformal maps in space under suitable
additional hypotheses \cite{AstalaManojlovic2015}.

On the lower Lipschitz side, Bo\v zin and Mateljevi\'c proved
co-Lipschitz estimates for harmonic quasiconformal mappings of the disk onto
Lyapunov Jordan domains \cite{BozinMateljevic2020}. In higher dimensions
they also proved a Jacobian nondegeneracy theorem for harmonic
quasiconformal mappings under the small-distortion condition
\[
        K_O<3^{m-1}.
\]
More precisely, their normal-family argument shows that if the Jacobian
vanishes at an interior point, then the first nonzero homogeneous term in
the Taylor expansion has degree at least \(3\); comparison with the
quasiconformal distortion estimate then forces \(K_O\ge 3^{m-1}\)
\cite{BozinMateljevic2015}. Consequently, their result supplies the
local-diffeomorphism hypothesis in
Theorem~\ref{th:harmonic-qc-local-diffeo-bilip} whenever
\(K_O<3^{m-1}\), and yields
Corollary~\ref{cor:small-distortion-dini-bilip} for harmonic
quasiconformal homeomorphisms of the unit ball onto bounded \(C^1\)-Dini
domains.

The result above fits into this line of work. Its boundary part follows the
Dini-regularized-distance and quasihyperbolic-metric method, while the lower
Lipschitz bound is obtained by a blow-up argument. The new point in the
present paper is that, in complex dimension two, the needed interior
nondegeneracy for pluriharmonic maps is supplied by the Lewy-type theorem
proved in Theorem~\ref{th:main}. Thus, for quasiconformal pluriharmonic
homeomorphisms in \(\C^2\), no smallness assumption on \(K\) is needed. In
the planar corollary the same role is played by Lewy's classical theorem.
\end{remark}
\subsection*{Funding}
The author is partially supported by the Ministry of Education, Science and Innovation of Montenegro through the grants \emph{Mathematical Analysis, Optimization and Machine Learning} and \emph{Complex-analytic and geometric techniques for non-Euclidean machine learning: theory and applications}.


\begin{thebibliography}{99}

\bibitem{AstalaManojlovic2015}
K. Astala and V. Manojlovi\'c,
\newblock On Pavlovi\'c's theorem in space,
\newblock \emph{Potential Analysis} \textbf{43} (2015), no. 3, 361--370.
\newblock DOI: \texttt{10.1007/s11118-015-9475-4}.


\bibitem{BochnakCosteRoy1998}
J. Bochnak, M. Coste and M.-F. Roy,
\newblock \emph{Real Algebraic Geometry},
\newblock Ergebnisse der Mathematik und ihrer Grenzgebiete, 3. Folge, Vol. 36,
\newblock Springer-Verlag, Berlin, 1998.


\bibitem{BozinMateljevic2015}
V. Bo\v zin and M. Mateljevi\'c,
\newblock Bounds for Jacobian of harmonic injective mappings in
\(n\)-dimensional space,
\newblock \emph{Filomat} \textbf{29} (2015), no. 9, 2119--2124.
\newblock DOI: \texttt{10.2298/FIL1509119B}.

\bibitem{BozinMateljevic2020}
V. Bo\v zin and M. Mateljevi\'c,
\newblock Quasiconformal and HQC mappings between Lyapunov Jordan domains,
\newblock \emph{Annali della Scuola Normale Superiore di Pisa, Classe di
Scienze} (5) \textbf{21} (2020), 107--132.
\newblock DOI: \texttt{10.2422/2036-2145.201708\_013}.

\bibitem{BrieskornKnorrer1986}
E. Brieskorn and H. Kn\"orrer,
\newblock \emph{Plane Algebraic Curves},
\newblock translated by J. Stillwell,
\newblock Birkh\"auser, Basel, 1986.

\bibitem{BurgheleaVerona1972}
D. Burghelea and A. Verona,
\newblock Local homological properties of analytic sets,
\newblock \emph{Manuscripta Mathematica} \textbf{7} (1972), 55--66.

\bibitem{Chirka1989}
E. M. Chirka,
\newblock \emph{Complex Analytic Sets},
\newblock Kluwer Academic Publishers, Dordrecht, 1989.

\bibitem{Dimca1992}
A. Dimca,
\newblock \emph{Singularities and Topology of Hypersurfaces},
\newblock Universitext,
\newblock Springer-Verlag, New York, 1992.

\bibitem{GehringOsgood1979}
F. W. Gehring and B. G. Osgood,
\newblock Uniform domains and the quasi-hyperbolic metric,
\newblock \emph{Journal d'Analyse Math\'ematique} \textbf{36} (1979),
50--74.

\bibitem{GrauertRemmert1984}
H. Grauert and R. Remmert,
\newblock \emph{Coherent Analytic Sheaves},
\newblock Grundlehren der mathematischen Wissenschaften, Vol. 265,
\newblock Springer-Verlag, Berlin, 1984.

\bibitem{Hormander1990}
L. H\"ormander,
\newblock \emph{An Introduction to Complex Analysis in Several Variables},
\newblock third revised edition,
\newblock North-Holland, Amsterdam, 1990.


\bibitem{Kalaj2008}
D. Kalaj,
\newblock Quasiconformal harmonic mappings between Jordan domains,
\newblock \emph{Mathematische Zeitschrift} \textbf{260} (2008), 237--252.

\bibitem{Kalaj2015Dini}
D. Kalaj,
\newblock Quasiconformal harmonic mappings between Dini-smooth Jordan domains,
\newblock \emph{Pacific Journal of Mathematics} \textbf{276} (2015), no. 1,
213--228.
\newblock DOI: \texttt{10.2140/pjm.2015.276.213}.

\bibitem{Kalaj2015AIM}
D. Kalaj,
\newblock Muckenhoupt weights and Lindel\"of theorem for harmonic mappings,
\newblock \emph{Advances in Mathematics} \textbf{280} (2015), 301--321.
\newblock DOI: \texttt{10.1016/j.aim.2015.04.025}.



\bibitem{Kalaj2022RMI}
D. Kalaj,
\newblock Harmonic quasiconformal mappings between \(C^1\) smooth Jordan domains,
\newblock \emph{Revista Matem\'atica Iberoamericana} \textbf{38} (2022), no. 1,
113--130.
\newblock DOI: \texttt{10.4171/RMI/1272}.

\bibitem{Lewy1936}
H. Lewy,
\newblock On the non-vanishing of the Jacobian in certain one-to-one mappings,
\newblock \emph{Bulletin of the American Mathematical Society} \textbf{42} (1936), no. 10, 689--692.
\newblock DOI: \texttt{10.1090/S0002-9904-1936-06397-4}.

\bibitem{Lewy1968}
H. Lewy,
\newblock On the non-vanishing of the Jacobian of a homeomorphism by harmonic gradients,
\newblock \emph{Annals of Mathematics} \textbf{88} (1968), no. 3, 518--529.
\newblock DOI: \texttt{10.2307/1970723}.

\bibitem{Lieberman1985}
G. M. Lieberman,
\newblock Regularized distance and its applications,
\newblock \emph{Pacific Journal of Mathematics} \textbf{117} (1985), no. 2,
329--352.

\bibitem{Lojasiewicz1964}
S. \L{}ojasiewicz,
\newblock Triangulation of semi-analytic sets,
\newblock \emph{Annali della Scuola Normale Superiore di Pisa, Classe di Scienze} (3) \textbf{18} (1964), 449--474.


\bibitem{Martio1969}
O. Martio,
\newblock On harmonic quasiconformal mappings,
\newblock \emph{Annales Academiae Scientiarum Fennicae. Series A I. Mathematica}
No. 425 (1969), 1--10.
\newblock DOI: \texttt{10.5186/aasfm.1969.425}.

\bibitem{MartioSarvas1979}
O. Martio and J. Sarvas,
\newblock Injectivity theorems in plane and space,
\newblock \emph{Annales Academiae Scientiarum Fennicae. Series A I.
Mathematica} \textbf{4} (1979), 383--401.

\bibitem{Matsumura1986}
H. Matsumura,
\newblock \emph{Commutative Ring Theory},
\newblock Cambridge Studies in Advanced Mathematics, Vol. 8,
\newblock Cambridge University Press, Cambridge, 1986.

\bibitem{Milnor1968}
J. Milnor,
\newblock \emph{Singular Points of Complex Hypersurfaces},
\newblock Annals of Mathematics Studies, No. 61,
\newblock Princeton University Press, Princeton, 1968.

\bibitem{Naser1973}
M. Naser,
\newblock A generalization of a theorem of Lewy on harmonic mappings,
\newblock \emph{Mathematical Notes of the Academy of Sciences of the USSR} \textbf{13} (1973), no. 2, 113--115.
\newblock DOI: \texttt{10.1007/BF01094227}.

\bibitem{Pavlovic2002}
M. Pavlovi\'c,
\newblock Boundary correspondence under harmonic quasiconformal homeomorphisms
of the unit disk,
\newblock \emph{Annales Academiae Scientiarum Fennicae. Mathematica}
\textbf{27} (2002), 365--372.

\bibitem{Pommerenke1992}
Ch. Pommerenke,
\newblock \emph{Boundary Behaviour of Conformal Maps},
\newblock Grundlehren der mathematischen Wissenschaften, Vol. 299,
\newblock Springer-Verlag, Berlin, 1992.
\bibitem{Vaisala1971}
J. V\"ais\"al\"a,
\newblock \emph{Lectures on \(n\)-Dimensional Quasiconformal Mappings},
\newblock Lecture Notes in Mathematics, Vol. 229,
\newblock Springer-Verlag, Berlin--New York, 1971.

\bibitem{Wall2004}
C. T. C. Wall,
\newblock \emph{Singular Points of Plane Curves},
\newblock London Mathematical Society Student Texts, Vol. 63,
\newblock Cambridge University Press, Cambridge, 2004.

\bibitem{Wood1991}
J. C. Wood,
\newblock Lewy's theorem fails in higher dimensions,
\newblock \emph{Mathematica Scandinavica} \textbf{69} (1991), no. 2, 166--170.
\newblock DOI: \texttt{10.7146/math.scand.a-12375}.

\end{thebibliography}
\end{document}